\numberwithin{equation}{section}
\theoremstyle{plain}
\newtheorem{Thm}{Th\'eor\`eme}[section]
\newtheorem{theorem}[Thm]{Theorem}
\newtheorem{proposition}[Thm]{Proposition}
\newtheorem{lemma}[Thm]{Lemma}
\theoremstyle{definition}
\newtheorem{definition}[Thm]{Definition}
\theoremstyle{remark}
\newtheorem{example}[Thm]{Example}
\newtheorem{remark}[Thm]{Remark}
\newcommand{\PScal}[2]{\left\langle{#1}\big\vert {#2}\right\rangle}
\newcommand{\Hc}{{\mathcal H}}
\newcommand{\Bc}{{\mathcal B}}
\newcommand{\Gc}{\mathcal{G}}
\newcommand{\N}{\mathbb{N}}
\newcommand{\Z}{\mathbb{Z}}
\newcommand{\R}{\mathbb{R}}
\newcommand{\C}{\mathbb{C}}
\newcommand{\F}{\mathbb{F}}
\begin{document}
\selectlanguage{english}

\title{Fixed point properties in the space of marked groups}
\author{Yves Stalder}
\date{March 27, 2008}
\address{Laboratoire de Math\'ematiques\\ Universit\'e Blaise Pascal\\ Campus
Universitaire des C\'ezeaux\\ 63177 Aubi\`ere cedex\\ France}
\email{yves.stalder@math.univ-bpclermont.fr}

\keywords{Uniform actions, ultralimits, Property (T), Property (F$\R$A)}

\begin{abstract}
 We explain, following Gromov, how to produce uniform isometric actions of 
 groups starting from
 isometric actions without fixed point, using common ultralimits techniques.
 This gives in particular a simple proof of a result by Shalom: Kazhdan's
 property (T) defines an open subset in the space of marked finitely generated
 groups.
\end{abstract}

\maketitle

\section{Introduction}

In this expository note, we are interested in groups whose actions on
some particular kind
of spaces always have (global) fixed points.
\begin{definition}
 Let $G$ be a (discrete) group. We say that $G$ has:
 \begin{itemize}
 \item Serre's \emph{Property (FH)}, if any isometric $G$-action on an affine
 Hilbert space has a fixed point \cite[Chap~4]{HV};
 \item Serre's \emph{Property (FA)}, if any $G$-action on a simplicial tree (by
 automorphisms and without inversion) has a fixed point \cite[Chap~I.6]{Ser};
 \item \emph{Property (F$\R$A)}, if if any isometric $G$-action on a complete
 $\R$-tree has a fixed point \cite[Chap~6.b]{HV}.
\end{itemize}
These definitions extend to topological groups: one has then to require the
actions to be continuous.
\end{definition}

Such properties give information about the structure of the group $G$. Serre
proved that a countable group has Property (FA) if and only if (i) it is
finitely generated, (ii) it has no infinite cyclic quotient, and (iii) it is
not an amalgam \cite[Thm~I.15]{Ser}. Among locally compact, second 
countable
groups, Guichardet and Delorme proved that Property (FH) is equivalent to
Kazhdan's Property (T) \cite{Gui,Del}. Kazhdan groups are known to be compactly
generated and to have a compact abelianization; see e.g. Chapter 1 in \cite{HV}.
It is known that Property (FH) implies Property (F$\R$A)\footnote{This was
noticed by several people; see \cite[Chap~6.b]{HV}.}, which itself obviously
implies Property (FA).

We are interested in the behavior of these properties in Grigorchuk's space of
marked (finitely generated) groups (see Section \ref{defs} for definition).
One main aim of this note is to provide a simple proof of the following result,
which implies that any finitely generated Kazhdan
group is a quotient of a finitely presented Kazhdan group:
\begin{theorem}[Shalom \cite{Sha}]\label{IntOuv_T}
 Property (FH) defines an open subset in the space of marked groups.
\end{theorem}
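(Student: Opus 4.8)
The plan is to argue by contradiction, using the ultralimit construction advertised in the abstract to convert a whole sequence of fixed-point-free actions into a single one. Suppose Property (FH) were not open. Then there is a marked group $(G,S)$ with (FH) which is a limit, in the space of marked groups on $n$ generators, of marked groups $(G_k,S_k)$ none of which has (FH). Writing $S=(s_1,\dots,s_n)$ and $S_k=(s_{1,k},\dots,s_{n,k})$, each $G_k$ admits an affine isometric action $\alpha_k$ on a Hilbert space $H_k$ with no global fixed point. I want to manufacture from the $\alpha_k$ an affine isometric action of $G$ without fixed point, contradicting (FH) of $G$.

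Fix a non-principal ultrafilter $\omega$ on $\N$. For each $k$ I will choose a basepoint $x_k\in H_k$ and a scaling factor $r_k>0$, rescale the metric of $H_k$ by $1/r_k$, and form the ultralimit $(H_\omega,x_\omega)$ of the pointed spaces $(H_k,x_k)$. Since an ultraproduct of Hilbert spaces is again a Hilbert space, $H_\omega$ is an affine Hilbert space. Provided each generator moves $x_k$ an $\omega$-bounded rescaled distance, the affine isometries $\alpha_k(s_{i,k})$ pass to affine isometries $\sigma_i$ of $H_\omega$, and more generally every word $w=w(s_1,\dots,s_n)$ yields a limit isometry $\sigma_w$. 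The point of the space of marked groups is that convergence $(G_k,S_k)\to(G,S)$ means exactly that the kernels in $F_n$ converge: any relation $w=1$ holding in $G$ holds in $G_k$ for $\omega$-almost every $k$. Hence $\alpha_k(w)=\id$ for such $k$, so $\sigma_w=\id$, and the $\sigma_i$ define a genuine affine isometric action $\alpha_\omega$ of $G$ on $H_\omega$.

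The crux is to choose $x_k$ and $r_k$ so that $\alpha_\omega$ has no fixed point; this is the main obstacle, and it is where the care advertised as Gromov's technique is needed. I will exploit that the displacement function $f_k(v)=\max_i\|\alpha_k(s_{i,k})v-v\|$ is convex and $2$-Lipschitz, being a maximum of the maps $v\mapsto\|\alpha_k(s_{i,k})v-v\|$, each of which is a norm of an affine map whose linear part $\pi_k(s_{i,k})-I$ has norm $\le 2$. Fix an origin $o_k$ and a radius $\rho_k$, and let $x_k$ minimise $f_k$ on the closed ball $B(o_k,\rho_k)$; such a minimiser exists because a closed ball in Hilbert space is weakly compact while $f_k$, being convex and continuous, is weakly lower semicontinuous. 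As $\alpha_k$ has no fixed point, $f_k$ never vanishes, so $r_k:=f_k(x_k)>0$; rescale by $1/r_k$. Then $f_k\ge r_k$ throughout $B(o_k,\rho_k)$, so the rescaled displacement is $\ge 1$ there, while each generator moves $x_k$ by rescaled distance $\le f_k(x_k)/r_k=1$, as required in the second step. Choosing the radii so that $\rho_k/r_k\to\infty$ (possible by a diagonal argument: for fixed $k$ the minimal value on $B(o_k,\rho)$ decreases to $\inf f_k$ as $\rho\to\infty$, so the ratio $\rho/f_k(x_k)$ tends to infinity), every point of $H_\omega$ is an ultralimit of points lying, for $\omega$-almost all $k$, inside $B(o_k,\rho_k)$. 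Since the limiting displacement $v\mapsto\max_i\|\alpha_\omega(s_i)v-v\|$ is the ultralimit of the rescaled $f_k$, it is therefore $\ge 1$ everywhere on $H_\omega$.

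In particular no point of $H_\omega$ has zero displacement, so the affine isometric action $\alpha_\omega$ of $G$ has no global fixed point, contradicting the hypothesis that $G$ has Property (FH). Hence (FH) defines an open subset of the space of marked groups. I expect the genuinely technical step to be the calibration of the third paragraph: it is the passage from the merely qualitative statement ``each $\alpha_k$ has unbounded orbits'' to a uniform, scale-invariant lower bound on displacement that survives the ultralimit, which is precisely the production of a uniform action out of fixed-point-free ones. The ball-minimiser choice of $x_k$ together with the diagonal choice of radii $\rho_k$ is what makes this work.
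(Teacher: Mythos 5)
Your overall architecture coincides with the paper's: form an ultralimit of rescaled pointed Hilbert spaces, check that it is again an affine Hilbert space, observe that the diagonal $\F_n$-action descends to $G$ because every relation of $G$ eventually holds in $G_k$, and reduce everything to a careful choice of basepoints $x_k$ and scales $r_k$ making the limit action fixed-point free. You have also correctly located the crux in that last choice. But the mechanism you propose for it has a genuine gap. Taking $x_k$ to minimise $f_k$ over $B(o_k,\rho_k)$ and $r_k=f_k(x_k)$ gives the lower bound $f_k\geq r_k$ on a ball centred at $o_k$, whereas the ultralimit argument needs a lower bound on balls of radius $Cr_k$ centred at $x_k$ (for each fixed $C$ and $k$ large). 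These regions are different: when $\inf f_k$ is not attained inside $B(o_k,\rho_k)$ --- the typical case --- the minimiser sits on the sphere $\partial B(o_k,\rho_k)$, and a representative sequence $(y_k)$ with $\|y_k-x_k\|\leq Cr_k$ may lie entirely outside $B(o_k,\rho_k)$. So your claim that every point of $H_\omega$ is represented by a sequence eventually inside the ball is unjustified, and replacing $y_k$ by its nearest point in the ball changes the point of $H_\omega$ by a distance of order $C$, not by an infinitesimal amount, so the $2$-Lipschitz bound on the displacement gives nothing once $C\geq 1/2$.

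The failure is not merely formal. Let $G_k=\Z\times\Z/2\Z$ act on $\R^2$ (isometrically embedded in a Hilbert space) by the translation $\tau_k(u,v)=(u+\tfrac1k,v)$ and the reflection $\sigma_k(u,v)=(u,2(\rho_k+1)-v)$, with $o_k=0$ and $\rho_k=k$. Each action is fixed-point free, $f_k(u,v)=\max\{\tfrac1k,\,2|v-\rho_k-1|\}$, the minimiser over $B(0,\rho_k)$ is $x_k=(0,\rho_k)$ with $r_k=2$, and $\rho_k/r_k\to\infty$; yet $y_k=(0,\rho_k+1)$ satisfies $\|y_k-x_k\|=r_k/2$ and $f_k(y_k)=\tfrac1k$, so $(y_k)$ represents a global fixed point of the limit action. (This group has no Property (FH), so the theorem is untouched, but your deduction that no point of $H_\omega$ has zero displacement is refuted for fixed-point-free $\alpha_k$ in general.) The paper's substitute for your ball-minimiser is precisely Lemma \ref{keyLemma}, after Shalom: iterating ``if some $y$ with $d(y,x)\leq n\delta(x)$ has $\delta(y)<\delta(x)/2$, move to $y$'' and using completeness of the space (the jumps decay geometrically, and the limit would be a fixed point), one finds for each $n$ a point $x_n$ with $\delta\geq\delta(x_n)/2$ on the ball of radius $n\delta(x_n)$ \emph{centred at $x_n$}. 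That self-centred estimate is exactly what your argument is missing; with $*_k$ given by that lemma (for $n=k$) and $r_k=\delta_k(*_k)^{-1}$, the rest of your proof goes through verbatim. Note also that this lemma works in arbitrary complete metric spaces, which the paper exploits for the $\R$-tree case, whereas your weak-compactness minimisation is specific to Hilbert spaces.
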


Rather than just
prove this result, our purpose is to indicate a general scheme, which gives
a common strategy for proving Shalom's result and Theorems \ref{Int_T_ppf} and
\ref{IntOuv_FRA} below.
\begin{theorem}[Korevaar-Schoen \cite{KoSch}, Shalom \cite{Sha}]\label{Int_T_ppf}
A finitely generated group $G$ has Property (FH) if and only if every isometric
$G$-action on a Hilbert space almost has fixed points.
\end{theorem}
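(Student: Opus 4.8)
The plan is to prove the two implications separately, the forward one being immediate and the converse being the substance. Throughout, fix a finite generating set $S$ of $G$ and, for an affine isometric action $\alpha$ on a Hilbert space $\mathcal{H}$, record the displacement $\delta_\alpha(x)=\max_{s\in S}\|s\cdot x-x\|$. With this notation the hypothesis that $\alpha$ almost has fixed points reads $\inf_{x\in\mathcal{H}}\delta_\alpha(x)=0$, while the assertion that $\alpha$ has a fixed point means this infimum is attained and equal to $0$. Following the terminology of the introduction, call $\alpha$ \emph{uniform} if $\inf_x\delta_\alpha(x)>0$; a uniform action has no almost fixed points. In these terms the theorem asserts that every affine isometric $G$-action on a Hilbert space has a fixed point if and only if no such action is uniform. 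The easy implication is then dispatched at once: if $G$ has Property (FH), every action has a genuine fixed point, which is in particular an almost fixed point, so no action can be uniform.

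The converse is the heart of the matter, and I would prove its contrapositive: \emph{from an affine isometric $G$-action without a fixed point I will manufacture a uniform action.} Let $\alpha$ on $\mathcal{H}$ have no fixed point, so $\delta_\alpha>0$ everywhere and $\ell:=\inf_x\delta_\alpha(x)\ge0$. If $\ell>0$ the action $\alpha$ is already uniform and there is nothing to do. The remaining case $\ell=0$ is exactly that of a cocycle lying in the closure of the coboundaries but not among them, and this is where the ultralimit construction enters. I would choose a minimizing sequence $x_n$ with $\delta_\alpha(x_n)\to0$; since the infimum is not attained, $\|x_n\|\to\infty$. Rescaling the metric of $\mathcal{H}$ by the factor $1/\delta_\alpha(x_n)$, basing the space at $x_n$, and forming the ultralimit $\mathcal{H}_\omega=\lim_\omega\bigl(\mathcal{H},\tfrac{1}{\delta_\alpha(x_n)}\|\cdot\|,x_n\bigr)$ along a fixed non-principal ultrafilter $\omega$, two structural facts make this work: an ultralimit of Hilbert spaces is again a Hilbert space, because the parallelogram identity passes to the limit; and the affine isometric $G$-action survives as an action $\alpha_\omega$, precisely because the chosen normalisation keeps the displacement of the basepoint equal to $1$, hence bounded, along the sequence.

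The main obstacle is to check that $\alpha_\omega$ is genuinely uniform, that is, that $\inf_{y}\delta_{\alpha_\omega}(y)>0$ rather than merely $\delta_{\alpha_\omega}(x_\omega)=1$ at the basepoint. Here I would exploit the special geometry of Hilbert spaces: the squared displacement $x\mapsto\sum_{s\in S}\|s\cdot x-x\|^2$ is a convex quadratic, since each $s\cdot x-x$ is an affine function of $x$. The rescaling blows up a neighbourhood of $x_n$ at the scale of its own displacement, and the danger is that a surviving downhill direction of this quadratic would, in the limit, let the displacement drop back towards $0$. Ruling this out forces a careful choice of the minimizing sequence $x_n$, so that the gradient of the squared displacement is small relative to $\delta_\alpha(x_n)$; convexity then pins the limiting displacement function below by a positive constant. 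Granting this, $\alpha_\omega$ is a uniform action, so $G$ admits an action with no almost fixed points. By contraposition, if every action almost has fixed points then every action has a fixed point, which is Property (FH); together with the easy direction this yields the equivalence.
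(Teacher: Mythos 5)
Your overall architecture coincides with the paper's: rescale the Hilbert space by $1/\delta(x_n)$ at well-chosen basepoints $x_n$, pass to an ultralimit along a non-principal ultrafilter, and use the fact that affine Hilbert spaces are closed under ultralimits (the paper's Lemma \ref{Hilb}). You also correctly identify the one genuinely delicate point: the displacement of the basepoint being $1$ after rescaling does not by itself prevent the limiting displacement function from having infimum $0$. But at exactly this point your argument stops: you write ``Granting this, $\alpha_\omega$ is a uniform action.'' The existence of basepoints for which the displacement stays bounded below, proportionally to $\delta(x_n)$, on balls whose radius measured in units of $\delta(x_n)$ tends to infinity, is the essential content of the proof; everything else is comparatively formal. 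As written, the proposal assumes the key lemma rather than proving it, so there is a genuine gap.

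The paper fills this gap with Lemma \ref{keyLemma}, a purely metric statement: for every $n$ there is $x_n$ such that $d(y,x_n)\le n\,\delta(x_n)$ forces $\delta(y)\ge \delta(x_n)/2$. Its proof is a completeness argument: if no such $x_n$ existed, one could iterate the failure to produce a sequence $z_k$ with $\delta(z_{k+1})<\delta(z_k)/2$ and $d(z_{k+1},z_k)\le n\,\delta(z_0)/2^k$, whose limit would be a fixed point. This uses no convexity and works in any complete metric space, which is why the same scheme also covers the $\R$-tree results. Your proposed route through the convex quadratic $F(x)=\sum_{s\in S}\|s\cdot x-x\|^2$ can be completed --- for instance, if every $x$ satisfied $\|\nabla F(x)\|>\delta(x)/n$, the gradient flow of $F$ would have finite length and converge, by completeness, to a zero of $F$, contradicting the absence of fixed points; first-order convexity then controls $F$ on the relevant balls around an almost-critical point --- but this is not simpler than the metric argument, it is special to Hilbert space, and in your text it is compressed into the phrase ``forces a careful choice,'' which is precisely the step requiring proof. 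A smaller remark: the basepoints supplied by the key lemma need not form a minimizing sequence for $\delta$, nor satisfy $\delta(x_n)\to 0$, so organizing the argument around a minimizing sequence (and the unused claim that $\|x_n\|\to\infty$) adds constraints the proof does not need.
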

\begin{theorem}[Culler-Morgan]\label{IntOuv_FRA}
 Property (F$\R$A) defines an open subset in the space of marked groups.
\end{theorem}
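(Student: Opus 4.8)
The plan is to prove that Property (F$\R$A) is open in the space of marked groups by the ultralimit (compactness) strategy the paper announces. I would argue by contradiction. Suppose $G = \Pres{s_1,\dots,s_k}{R}$ has Property (F$\R$A) but is not an interior point: then there is a sequence of marked groups $G_n$ converging to $G$ in the marked group topology that do \emph{not} have Property (F$\R$A). Convergence to $G$ means that for every fixed word length $\ell$, the relations among the generators of $G_n$ eventually coincide with those of $G$ up to length $\ell$ (the ball of radius $\ell$ in the Cayley graph stabilizes). Failure of (F$\R$A) gives, for each $n$, a complete $\R$-tree $T_n$ with an isometric $G_n$-action having no fixed point.

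The key technical step is to extract a limiting action out of the $T_n$ via an ultralimit and derive a fixed-point-free isometric action of $G$ on a complete $\R$-tree, contradicting the hypothesis. First I would fix a nonprincipal ultrafilter $\omega$ on $\N$ and basepoints $x_n \in T_n$ chosen to control the geometry. The natural scale is set by the \emph{displacement} of the generators: let $\lambda_n = \max_i d_{T_n}(x_n, s_i x_n)$ and, after translating, normalize so that $\lambda_n = 1$ (here one must first rule out the degenerate case $\lambda_n \to 0$, which would force an approximate fixed point and, by completeness together with the no-fixed-point hypothesis via the structure theory of $\R$-tree actions, a genuine fixed point for $G_n$ — the usual dichotomy elliptic/hyperbolic for group actions on $\R$-trees). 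With this normalization the rescaled pointed spaces $(\tfrac{1}{\lambda_n}T_n, x_n)$ have uniformly bounded generator displacements, so the ultralimit $T_\omega = \lim_\omega(\tfrac{1}{\lambda_n}T_n, x_n)$ exists, is a complete $\R$-tree (the class of $\R$-trees is closed under ultralimits, since the four-point $0$-hyperbolicity condition passes to the limit), and carries an isometric $G$-action: an element $g$ acts by $g\cdot[x_n] = [g\cdot x_n]$, and this is well defined on $G$ precisely because $G_n \to G$ ensures every relation of $G$ is satisfied $\omega$-almost surely in $G_n$, while the words defining $g$ have bounded length so their images stabilize.

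The final step is to check that the limiting action has no fixed point, thereby contradicting Property (F$\R$A) for $G$. Here I would use that the normalization $\max_i d(x_n, s_i x_n) = 1$ survives to the limit, giving $\max_i d_{T_\omega}([x_n], s_i[x_n]) = 1 > 0$, so at least one generator moves the basepoint; but this alone does not preclude a fixed point elsewhere. To close the gap I would invoke the structure theory of isometric group actions on $\R$-trees (as in the Culler--Morgan framework): the translation-length function $g \mapsto \ell_{T_\omega}(g) = \inf_{x} d(x, g x)$ is obtained as the ultralimit of the rescaled translation-length functions $\ell_{T_n}/\lambda_n$, and one shows the latter do not all converge to $0$. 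The \textbf{main obstacle} is exactly this last quantitative point: controlling the minimal displacement (not merely the displacement at a chosen basepoint) uniformly in $n$, so that the limit action is genuinely fixed-point-free rather than merely nontrivial. This is handled by the standard trichotomy for $\R$-tree actions (an action without global fixed point is either hyperbolic, or fixes an end, or is a nondegenerate reflection-type action) combined with a careful choice of $x_n$ near the minset and a rescaling that keeps the relevant translation lengths bounded away from $0$ along the ultrafilter; the openness in the marked group topology then follows since the contradiction shows some neighborhood of $G$ consists entirely of groups with Property (F$\R$A).
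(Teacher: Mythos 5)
Your overall architecture is the same as the paper's: pass to an ultralimit of rescaled pointed trees, use that geodesicity and $0$-hyperbolicity (hence being an $\R$-tree) and completeness survive ultralimits, and note that the diagonal action of the free group descends to $G$ because each relation of $G$ holds $\omega$-almost surely in $G_n$. You also correctly isolate the crux: the normalization $\max_i d_{T_\omega}([x_n],s_i[x_n])=1$ only says the basepoint is moved, not that the limit action is fixed-point-free. But at exactly that point the proposal stops being a proof. The ``careful choice of $x_n$ near the minset and a rescaling that keeps the relevant translation lengths bounded away from $0$'' is announced, not performed, and it is genuinely needed: with an arbitrary basepoint normalized so that $\lambda_n=1$, the limit action can have a fixed point. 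For instance, if $G_n$ contains a hyperbolic element of translation length $\varepsilon_n$ with axis $A_n$ and you take $x_n$ at distance $D_n$ from $A_n$ with $\varepsilon_n/D_n\to 0$, the rescaled translation length tends to $0$ while the limit of the axes stays at bounded rescaled distance from the basepoint, so the limit isometry is elliptic with fixed points inside the ultralimit. What is missing is a concrete choice of $(x_n,r_n)$ together with a proof that \emph{every} point of $\Bc_r$ has displacement bounded below in the limit.

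The paper supplies exactly this missing quantitative step, in two ways. Its Lemma \ref{keyLemma} (valid in any complete metric space, no tree structure needed) produces points $x_n$ such that every $y$ with $d(y,x_n)\le n\,\delta_n(x_n)$ satisfies $\delta_n(y)\ge \delta_n(x_n)/2$, where $\delta_n(x)=\max_i d_{T_n}(x,s_ix)$; taking $r_n=\delta_n(x_n)^{-1}$ then forces every point of the ultralimit to be displaced by at least $1/2$ by some generator. Alternatively --- and this is what your appeal to the elliptic/hyperbolic dichotomy is groping toward; see Remark \ref{SerreLemma} and the closing remark of Section \ref{Thms} --- a finitely generated group acting without fixed point on an $\R$-tree acts \emph{uniformly} (some element is hyperbolic), so $\varepsilon_n:=\inf_x\delta_n(x)>0$; choosing $x_n$ with $\delta_n(x_n)\le 2\varepsilon_n$ and $r_n=\varepsilon_n^{-1}$ gives $r_n\delta_n(y)\ge 1$ for all $y\in T_n$, a bound that passes to the ultralimit and makes any discussion of limiting translation-length functions unnecessary. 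Either route would close your gap; as written, the decisive step is absent.
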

In fact, one deduces the latter Theorem from \cite[Thm~4.5]{CM} by
applying it to a free group $\F_n$.
On the other hand, it is an open problem whether Property (FA) defines
an open subset in the space of marked groups.

The general (simple) idea for the scheme is, starting
from actions without (global) fixed points, to pass to a ``limit'' of the
spaces to get uniform actions (that is, not almost having fixed points; see
Section~\ref{defs}). It follows in fact the same strategy as in 
\cite[Sect 3.8.B]{Gro03}\footnote{This was pointed out to me by Yves de 
Cornulier, after this paper was accepted for publication and posted on the 
arXiv. There also exist good unpublished notes \cite{Cor} on this topic.}, 
and Theorems \ref{IntOuv_T} -- \ref{IntOuv_FRA} are parcular cases
of Gromov's results. 

\begin{theorem}[Gromov]\label{IntUltrSansPPF}
 Let $(G_k,S_k)_{k\in\N}$ be a sequence of marked groups converging to $(G,S)$.
 If each group $G_k$ acts without fixed point on a (non-empty) complete metric
 space $(X_k,d_k)$, then $G$ acts uniformly  on some ultralimit of the spaces
 $(X_k,d_k)$.
\end{theorem}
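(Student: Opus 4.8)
The plan is to realise the desired uniform action on an ultralimit of suitably \emph{rescaled} copies of the spaces $X_k$, the rescaling being dictated by how far the generators must move an optimally chosen basepoint. Fix once and for all a non-principal ultrafilter $\omega$ on $\N$, and write $S_k=(s_{k,1},\dots,s_{k,n})$ and $S=(s_1,\dots,s_n)$ for the markings (of common length $n$). For each $k$ I would introduce the displacement function
\[
 f_k\colon X_k\to[0,\infty),\qquad f_k(x)=\max_{1\le i\le n} d_k(s_{k,i}x,x).
\]
Each map $x\mapsto d_k(s_{k,i}x,x)$ is $2$-Lipschitz, so $f_k$ is continuous; moreover $f_k(x)>0$ for every $x$, since a common fixed point of the generators would be a global fixed point of $G_k$, which is excluded. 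Set $\rho_k=\inf_{X_k}f_k\ge 0$. Note that I do \emph{not} assume $\rho_k>0$: the action may well almost have fixed points, and it is precisely this situation that the basepoint selection below is designed to tame.

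The hard part, and the heart of the argument, is to choose basepoints that are \emph{almost minimising at their own scale}, not merely globally. Here I would invoke Ekeland's variational principle on the complete space $X_k$: choosing $\epsilon_k\downarrow 0$ and an auxiliary scale $\lambda^{(k)}$ with $\epsilon_k/\lambda^{(k)}\to 0$, it yields a point $x_k\in X_k$ with $f_k(x_k)\le\rho_k+\epsilon_k$ and
\[
 f_k(y)\ \ge\ f_k(x_k)-\frac{\epsilon_k}{\lambda^{(k)}}\,d_k(x_k,y)\qquad\text{for all }y\in X_k.
\]
(If one prefers to avoid citing Ekeland, the same $x_k$ can be produced by a direct iterative descent, stopping when no sufficiently large relative decrease of $f_k$ is available nearby; the no-fixed-point hypothesis forces $f_k(x_k)>0$, so the rescaling below is legitimate.) Put $\mu_k=f_k(x_k)>0$, rescale each metric to $d_k/\mu_k$, and let $X_\omega$ be the ultralimit of the pointed spaces $(X_k,d_k/\mu_k,x_k)$, with basepoint $x_\omega=[(x_k)]$; it is again a complete metric space.

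Next I would let $G$ act on $X_\omega$. A word $w$ in the generators acts by $[(y_k)]\mapsto[(w\cdot y_k)]$, using the image of $w$ in each $G_k$; for a single generator one has $d_k(x_k,s_{k,i}y_k)\le\mu_k+d_k(x_k,y_k)$, so $s_i$ sends bounded sequences to bounded sequences, preserves $X_\omega$, and acts by an isometry. This $\F_n$-action descends to $G$: if $w=1$ in $G$ then, by convergence of the markings, $w=1$ in $G_k$ for all large $k$, so $w$ acts as the identity on $X_k$ for $\omega$-almost all $k$ and hence fixes $X_\omega$ pointwise. Finally, for uniformity, take any $y=[(y_k)]\in X_\omega$, put $D=d_\omega(x_\omega,y)<\infty$ and $L=D+1$, so that $d_k(x_k,y_k)\le L\mu_k$ for $\omega$-almost all $k$. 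The displacement estimate then gives $f_k(y_k)/\mu_k\ge 1-\epsilon_k L/\lambda^{(k)}\ge\tfrac12$ for $\omega$-almost all $k$, and since $\max_i d_\omega(s_iy,y)=\lim_\omega f_k(y_k)/\mu_k$ (a finite maximum commutes with the ultralimit), every point of $X_\omega$ is moved at least $\tfrac12$ by some generator, so $G$ acts uniformly. I expect the genuinely delicate point to be the variational selection of the basepoints: a naive near-minimiser of $f_k$ does not suffice when $\rho_k=0$, because other points may have far smaller displacement and survive into the ultralimit as almost-fixed points; local almost-minimality at scale $\lambda^{(k)}\gg\mu_k$ is exactly what rules this out.
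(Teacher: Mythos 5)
Your proof is correct and follows essentially the same route as the paper's (Lemma \ref{keyLemma}, Proposition \ref{pf_diag} and Theorem \ref{UltrSansPPF}): select basepoints at which the displacement function is almost minimal in a quantitatively \emph{local} sense, rescale each space by the reciprocal of the displacement there, pass to the ultralimit, and factor the $\F_n$-action through $G$ using the convergence of the markings. The only difference is packaging: you obtain the basepoints from Ekeland's variational principle, whereas the paper proves the equivalent statement (Lemma \ref{keyLemma}) directly by the completeness-based iterative descent that you mention as an alternative.
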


The definition of ultralimits will be given in Section \ref{UltSt}. Note that
we allow to rescale the spaces before taking the limit (see Theorem 
\ref{UltrSansPPF} for a more precise statement). The idea
to take ``limits'' of metric spaces is not new, even for such purposes.
Asymptotic cones, introduced by Gromov in \cite{Gro81} and defined rigorously
in \cite{Gro93, DrWi} are a major particular case of ultralimits which is
very useful in the study of metric spaces and groups, see e.g. \cite{Dru} and
references therein. Ultralimits appear explicitely in \cite{KleLe,BriH}, for
instance. Finally, let us mention that Korevaar and Schoen \cite{KoSch}
introduced limits of CAT(0) spaces
(with another process) and proved in this context results of the same spirit
as Theorem \ref{IntUltrSansPPF}.

Section \ref{defs} gives the necessary preliminaries. In Section \ref{UltSt},
we recall what ultralimits are and prove Theorem \ref{IntUltrSansPPF}. Finally,
Section 3 is devoted to applications to Properties (FH) and (F$\R$A).

\subsection*{Acknowledgements.} I would like to thank Benjamin Delay for having
pointed out a mistake in the proof of Theorem \ref{UltrSansPPF} and for his
suggestions about a first version of this paper. Thanks are also due to Yves
de Cornulier for references \cite{Cor,Gro03,Laf}, to Vincent
Guirardel for a useful discussion about the results in this text, and to the
referee for her/his valuable suggestions. Finally, I
am particularly grateful to Nicolas Monod and Alain Valette for their very
valuable advices and hints, and for their remarks about previous versions of
the text.

\section{Preliminaries}\label{defs}

\subsection{Terminology about group actions.} In this note, every metric space
is assumed to be non-empty and all groups considered are discrete ones.
Let $(X,d)$ be a metric space and let $G$ be a
group acting on it by isometries. The action is said to \emph{almost
have fixed points} if, for all $\varepsilon>0$ and for all finite subset
$F\subseteq G$, there exists $x\in X$ such that $d(g\cdot x, x) < \varepsilon$
for all $g\in F$; it is said to be \emph{uniform} otherwise.

An action with a global fixed point almost has fixed points, but the converse
strongly does not hold. Indeed, the following examples show that an action with
almost fixed points can be \emph{metrically proper}, that is such that for any
$x\in X$ and $R>0$, the set $\{g\in G : d(x,gx) \leq R\}$ is finite.
\begin{example}[on the hyperbolic plane]\label{Prop_ppf_hyp}
 Let $\mathbb H^2$ be the
 Poincar\'e upper half-plane and define $\varphi: \mathbb H^2 \to \mathbb H^2$
 by $\varphi(z) = z+1$. This gives a $\Z$-action by isometries on
 $\mathbb H^2$ which is metrically proper and almost has fixed points.
\end{example}
\begin{example}[on a Hilbert space]\label{Prop_ppf}
 Let $S$ be the shift operator on $\ell^2\Z$ (defined by $(S\xi)(n) =
 \xi(n-1)$) and $\delta_0 \in \ell^2\Z$ the Dirac mass at $0$. The affine map
 $\xi \mapsto S\xi + \delta_0$ defines a $\Z$-action by isometries on
 $\ell^2\Z$ which is metrically proper and almost has fixed points.
\end{example}
\begin{remark}\label{ppf_tf}
 In the case of finitely generated groups, the definition can be made easier:
 if $S$ is a finite generating set of $G$, a $G$-action by isometries on $X$
 almost has fixed points if and only if, for all $\varepsilon>0$, there exists
 $x\in X$ such that $\max\{d(x,s\cdot x) : s\in S\} < \varepsilon$.
\end{remark}

\subsection{The space of marked groups.} Let us recall that a \emph{marked
group on $n$ generators} is a pair $(G,S)$ where $G$ is a group and $S\in G^n$
generates $G$. A marked group $(G,S)$ defines canonically a quotient $\phi:\F_n
\to G$, and vice-versa. Moreover, for such a quotient, we may consider the
normal subgroup $N = \ker(\phi) \triangleleft \F_n$. Two marked groups, or two
quotients, are said to be equivalent if they define the same normal subgroup of
$\F_n$. Abusing terminology, we denote by $\Gc_n$ the set of (equivalence
classes of) marked groups on $n$ generators, or the set of (equivalence classes
of) quotients of $\F_n$, or the set of normal subgroups of $\F_n$.

We now describe Grigorchuk's topology on $\Gc_n$ \cite{Gri84}, which
corresponds to an earlier construction by Chabauty \cite{Chab}; for
introductory expositions, see \cite{Cha, ChaGui, Pau}. Denote by $B_r$ the ball
of radius $r$ in $\F_n$ (centered at the trivial element). Given normal
subgroups $N \neq N' \triangleleft \F_n$, we set
\[
 d(N,N') := \exp\left(-\max\big\{ r\in \N : \, N' \cap B_r = N \cap B_r \big\}
 \right) \ .
\]
This turns $\Gc_n$ into a compact, ultrametric, separable space. The map
\[
\big(G,(s_1, \ldots s_n)\big) \mapsto \big(G,(s_1, \ldots s_n,1_G)\big)
\]
defines an isometric embedding $\Gc_n \to \Gc_{n+1}$ for all $n\in\N$. We
denote by $\Gc$ the direct limit of this directed system of topological spaces
and call $\Gc$ the \emph{space of (finitely generated) marked groups}. Note
that $\Gc_n$ is open and closed in $\Gc$ for all $n$. Given
$N\triangleleft \F_n$ and $N_k \triangleleft \F_n$ for $k\in \N$, one has
$\lim N_k = N$ if and only if, for all  $g\in \F_n$: ($g\in N
\Longleftrightarrow g\in N_k$ for $k$ sufficiently large) and ($g\notin N
\Longleftrightarrow g\notin N_k$ for $k$ sufficiently large).

\section{From actions without fixed point to uniform actions}\label{UltSt}

\subsection{Being far from almost fixed points.} Let $(G,S)$ be a marked
(finitely generated) group. Let it act by isometries on a  metric
space $(X,d)$ and set $\delta(x) = \max\{d(x,sx): s\in S\}$ for any $x\in X$. A
point $x\in X$ is fixed by $G$ if and only if $\delta(x)=0$, and the action
almost has fixed points if and only if $\inf\{\delta(x): x\in X\} = 0$.

\begin{remark}\label{SerreLemma}
 As $G$ is finitely generated, a $G$-action on an $\R$-tree $T$ almost has
 fixed points if and only if it has
 a fixed point. Indeed, if the action on $T$ has no fixed point, there exists
 $g\in G$  which induces a hyperbolic isometry of $T$ --- this can be deduced from
 \cite[Cor~2.3]{Tignol}; see also  \cite[Prop~II.2.15]{MoSha} or
 \cite[Exercise 2.8]{Best}. Thus, the $G$-action on $T$ is uniform.
\end{remark}
This property is specific to $\R$-trees, as illustrated in Examples
\ref{Prop_ppf_hyp} and \ref{Prop_ppf}. For general metric spaces, this Section
will explain how to produce uniform actions from actions without fixed points.

\begin{lemma}\label{Lip}
 We have $|\delta(x) - \delta(y)| \leq 2d(x,y)$ for all $x,y\in X$.
\end{lemma}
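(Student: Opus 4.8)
The plan is to prove the Lipschitz-type estimate for $\delta$ directly from the triangle inequality and the fact that $G$ acts by isometries. Recall that $\delta(x) = \max\{d(x,sx): s\in S\}$, so I want to bound the difference of two maxima over the same finite index set $S$. First I would observe that it suffices to show $\delta(x) \leq \delta(y) + 2d(x,y)$; the reverse inequality then follows by exchanging the roles of $x$ and $y$, and together these give $|\delta(x)-\delta(y)| \leq 2d(x,y)$.

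To establish $\delta(x) \leq \delta(y) + 2d(x,y)$, I would fix an arbitrary $s\in S$ and estimate $d(x,sx)$ by inserting the points $y$ and $sy$ and applying the triangle inequality:
\[
 d(x,sx) \leq d(x,y) + d(y,sy) + d(sy,sx) \ .
\]
The key step is to use that $s$ acts as an isometry, which gives $d(sy,sx) = d(y,x) = d(x,y)$. Combining this with $d(y,sy) \leq \delta(y)$ (by definition of $\delta(y)$ as the maximum over $S$) yields
\[
 d(x,sx) \leq \delta(y) + 2d(x,y)
\]
for every $s\in S$. Taking the maximum over $s\in S$ on the left-hand side then gives $\delta(x) \leq \delta(y) + 2d(x,y)$, as desired.

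There is no real obstacle here: the only point that requires the hypotheses is the isometry step $d(sy,sx)=d(x,y)$, and the passage from a bound holding for each $s$ to a bound on the maximum is immediate since $S$ is finite (or more generally since the bound is uniform in $s$). The factor of $2$ arises precisely because the triangle inequality contributes one copy of $d(x,y)$ directly and a second copy through the isometric displacement of the pair $(x,y)$ to $(sx,sy)$.
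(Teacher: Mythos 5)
Your proof is correct and follows essentially the same route as the paper: insert $y$ and $sy$ via the triangle inequality, use that $s$ is an isometry to get $d(sy,sx)=d(x,y)$, and then take the maximum over $s\in S$ and symmetrize in $x$ and $y$. No issues.
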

\begin{proof}
 For any $s\in S$, the triangle inequality gives
 \[
  d(x,sx) \leq d(x,y) + d(y,sy) + d(sy,sx)  \leq d(y,sy) + 2 d(x,y) \ ,
 \]
 which implies $\delta(x) \leq \delta(y) + 2 d(x,y)$. We then deduce similarly
 $\delta(y) \leq \delta(x) + 2 d(x,y)$.
\end{proof}

We now introduce one key ingredient to produce uniform actions, which has been
directly inspired from Lemma 6.3 in \cite{Sha}; see also Proposition 4.1.1 in
\cite{KoSch} and (the proof of) Lemma 2 in \cite{Laf}. It asserts, that we may 
find points which are, roughly speaking,
far from almost fixed points.

\begin{lemma}\label{keyLemma}
 Assume the space $X$ is complete and the action has no fixed point. Then, for
 all $n\in\N^*$, there exists $x_n\in X$ such that:
 \[
  \text{for all } y\in X, \quad d(y,x_n) \leq n\delta(x_n) \ \Longrightarrow
  \ \delta(y) \geq \frac{\delta(x_n)}{2} \ .
 \]
\end{lemma}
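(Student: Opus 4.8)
The plan is to argue by contradiction, exploiting completeness to manufacture a fixed point when no such $x_n$ exists. Fix $n\in\N^*$ and suppose, for contradiction, that for \emph{every} $x\in X$ there is some $y\in X$ with $d(y,x)\le n\delta(x)$ yet $\delta(y)<\delta(x)/2$. This negation is exactly a recipe for improving any point, so I would iterate it to build a sequence converging to a fixed point, contradicting the hypothesis that the action has none.

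Concretely, starting from an arbitrary $x_0\in X$ (here $\delta(x_0)>0$, since no fixed point exists means $\delta$ is strictly positive everywhere), I would recursively choose $x_{k+1}$ with
\[
 d(x_{k+1},x_k)\le n\delta(x_k)\qquad\text{and}\qquad \delta(x_{k+1})<\tfrac{1}{2}\delta(x_k)\ .
\]
An immediate induction gives $\delta(x_k)<\delta(x_0)/2^k$, so in particular $\delta(x_k)\to 0$.

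The key step is to verify that $(x_k)$ is Cauchy. Since $d(x_{k+1},x_k)\le n\delta(x_k)<n\delta(x_0)/2^k$, the consecutive displacements are dominated by a convergent geometric series, $\sum_k d(x_{k+1},x_k)<n\delta(x_0)\sum_k 2^{-k}=2n\delta(x_0)<\infty$; hence the tail sums go to zero and the sequence is Cauchy. By completeness of $X$, it converges to some $x_\infty\in X$.

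Finally, I would invoke Lemma \ref{Lip}: as $\delta$ is $2$-Lipschitz it is continuous, so $\delta(x_\infty)=\lim_k\delta(x_k)=0$, meaning $x_\infty$ is fixed by $G$ --- the desired contradiction. The only real obstacle is arranging the telescoping estimate cleanly; the \emph{strict} halving $\delta(x_{k+1})<\delta(x_k)/2$ is what simultaneously yields the geometric decay needed for the Cauchy property and forces $\delta(x_\infty)=0$. The parameter $n$ plays no essential role beyond scaling the finite total displacement, which is precisely why the conclusion holds for every $n\in\N^*$.
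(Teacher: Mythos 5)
Your proof is correct and follows essentially the same route as the paper: negate the statement for some fixed $n$, iterate the resulting ``improvement'' to build a sequence with geometrically decaying $\delta$-values and displacements, use completeness to extract a limit, and apply Lemma \ref{Lip} to conclude the limit is a fixed point, contradicting the hypothesis. No gaps.
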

\begin{proof}
 Let us assume by contradiction that there exists some $n\in\N^*$ such that,
 for all $x\in X$, there exists $y=y(x) \in X$ which satisfies both $d(y,x)
 \leq n\delta(x)$ and $\delta(y) < \delta(x)/2$.
 Let us now take some $z_0 \in X$ (recall that $X$ is non-empty). Then, we
 define inductively a sequence of points $z_k$ such that $d(z_{k+1},z_k)
 \leq n\delta(z_k)$ and $\delta(z_{k+1}) < \delta(z_k)/2$ for all $k\in\N$.
 Consequently, we have $\delta(z_k) < \delta(z_0)/2^k$, whence $d(z_{k+1},z_k)
 \leq n\delta(z_0)/2^k$. Since $X$ is complete, this shows that $z_k$ converges
 to some point $z$ as $k$ tends to $\infty$. Thanks to Lemma \ref{Lip}, we
 obtain $\delta(z) = \lim_{k\to\infty} \delta(z_k) = 0$. Hence, $z$ is a fixed
 point of the $G$-action, a contradiction.
\end{proof}

Note that the hypotheses on $X$ made in Lemma \ref{keyLemma} cannot be dropped.
Indeed, to obtain counter-examples, consider $\Z$ acting by rotations on $\C$,
respectively $\C \setminus\{0\}$.

\subsection{Ultrafilters and ultralimits of metric spaces.} Bourbaki defines
ultrafilters to be maximal filters \cite{Boutopo}. However, we think
slightly differently to ultrafilters in this note.
\begin{definition}
 An \emph{ultrafilter} on some (non-empty) set $E$ is a finitely-additive,
 $\{0,1\}$-valued measure on $\mathcal P(E)$, that is a function
 $
 \omega : \mathcal P(E) \to \{0,1\}
 $
 which satisfies: (i) $\omega(A\cup B) = \omega(A) + \omega(B)$ whenever $A\cap
 B = \emptyset$, and (ii) $\omega(E) = 1$
\end{definition}

Note that (i) and (ii) imply $\omega(\emptyset) = 0$. In this note, we shall
only need to consider ultrafilters
on $\N$. The following well-known Lemma establishes the equivalence with
Bourbaki's definition. Its proof is given for
completeness.
\begin{lemma}
 A function $\omega: \mathcal P(E) \to \{0,1\}$ is an ultrafilter if and only
 if $\mathcal F = \{A\in\mathcal P (E) : \omega(A) = 1\}$ satisfies:
\begin{enumerate}
 \item $\emptyset \notin \mathcal F$ and $E \in \mathcal F$;
 \item if $A\in \mathcal F$ and $A \subseteq B \subseteq E$, then
 $B \in \mathcal F$;
 \item if $A,B \in  \mathcal F$, then $A \cap B \in \mathcal F$;
 \item for any $A \subseteq E$, one has $A \in \mathcal F$ or
 $E\setminus A \in \mathcal F$.
\end{enumerate}
\end{lemma}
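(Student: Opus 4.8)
The plan is to prove the two implications separately. Throughout, observe that $A \in \mathcal F$ means exactly $\omega(A) = 1$, while $A \notin \mathcal F$ means $\omega(A) = 0$, since $\omega$ takes only the values $0$ and $1$.

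For the forward implication, I would first record a monotonicity property: if $C \subseteq D \subseteq E$, then writing $D$ as the disjoint union of $C$ and $D \setminus C$ and applying finite additivity (i) gives $\omega(D) = \omega(C) + \omega(D\setminus C) \geq \omega(C)$. Property (1) is then immediate: $\omega(E) = 1$ gives $E \in \mathcal F$, and taking $A = B = \emptyset$ in (i) forces $\omega(\emptyset) = 0$, so $\emptyset \notin \mathcal F$. Property (2) follows from monotonicity together with the fact that $\omega$ is $\{0,1\}$-valued. For property (4), I would apply (i) to the disjoint decomposition $E = A \cup (E\setminus A)$, obtaining $1 = \omega(A) + \omega(E \setminus A)$, so exactly one of the two summands equals $1$. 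Finally, for (3), given $A, B \in \mathcal F$, the disjoint decomposition $A = (A\cap B) \cup (A\setminus B)$ yields $1 = \omega(A\cap B) + \omega(A\setminus B)$; since $A \setminus B \subseteq E \setminus B$ and $\omega(E\setminus B) = 0$ by (4), monotonicity forces $\omega(A\setminus B) = 0$, hence $\omega(A\cap B) = 1$.

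For the converse, I would assume $\mathcal F$ satisfies (1)--(4), let $\omega$ be $1$ on $\mathcal F$ and $0$ elsewhere, and check the two axioms. Axiom (ii) is just $E \in \mathcal F$ from (1). For finite additivity (i), I would take disjoint sets $A, B$ and argue by cases on how many of them lie in $\mathcal F$. The case where both lie in $\mathcal F$ is impossible: property (3) would put $A \cap B = \emptyset$ in $\mathcal F$, contradicting (1). If exactly one, say $A$, lies in $\mathcal F$, then upward closure (2) gives $A \cup B \in \mathcal F$, and both sides of (i) equal $1$. If neither lies in $\mathcal F$, I would use property (4) to pass to complements: $E\setminus A$ and $E \setminus B$ both lie in $\mathcal F$, so by (3) their intersection $E \setminus (A\cup B)$ lies in $\mathcal F$; were $A \cup B$ also in $\mathcal F$, property (3) would again produce $\emptyset \in \mathcal F$, contradicting (1). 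Hence $A \cup B \notin \mathcal F$ and both sides of (i) vanish.

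The routine part is the bookkeeping of disjoint decompositions; the one step that genuinely uses the full strength of the hypotheses is the ``neither'' case of the converse, where property (4) is indispensable --- it is precisely maximality of the filter that rules out $A\cup B \in \mathcal F$ when both complements are large. I expect this case, and the correct invocation of (4) to convert an assertion about $A, B$ into one about their complements, to be the only place requiring care.
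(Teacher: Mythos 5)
Your proof is correct and follows essentially the same route as the paper: the forward direction via finite additivity on disjoint decompositions, and the converse via a case analysis on how many of the two disjoint sets lie in $\mathcal F$, with the ``neither'' case handled by passing to complements via (4) and intersecting via (3). The only (inessential) variation is in the forward proof of (3), where you split $A$ as $(A\cap B)\cup(A\setminus B)$ and invoke monotonicity, whereas the paper splits $A\cup B$ into three disjoint pieces; both computations are equally valid.
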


\begin{remark}
 Properties (1)--(3) are precisely the axioms of \emph{filters} in \cite{Boutopo}.
\end{remark}

\begin{proof}
 Suppose first $\omega$ is an ultrafilter. Then (1), (2) and (4) are obvious.
 If $A,B \in  \mathcal F$, then (2) gives $\omega(A\cup B) = 1$ and (i) implies
 $\omega((A\cup B)\setminus B) = 0$, $\omega((A\cup B)\setminus A) = 0$ and
 finally
 \[
  \omega(A\cap B) = \omega(A\cup B) - \omega((A\cup B)\setminus B) -
  \omega((A\cup B)\setminus A) = 1 \ .
 \]
 Hence (3) is proved.

 Conversly, we now assume (1)--(4). Then (ii) is obvious. Properties (1) et (3)
 imply: (5) for any $C,D\subseteq E$ such that $C\cap D = \emptyset$, one has
 $C\notin \mathcal F$ or $D\notin \mathcal F$. To prove (i), let us now take
 $A,B\subseteq E$ such that $A\cap B = \emptyset$. The case $\omega(A) = 1 =
 \omega(B)$ is impossible by (5). In case $\omega(A) = 1$ and $\omega(B)=0$
 (or $\omega(A) = 0$ and $\omega(B)=1$), property (2) implies $\omega(A\cup B)
 = 1$. Finally, if $\omega(A) = 0 = \omega(B)$, (4) gives $E\setminus
 A\in \mathcal F$ and $E\setminus B\in \mathcal F$. We deduce then
 $(E\setminus A)\cap(E\setminus B)\in \mathcal F$ by (3) and $\omega(A\cup B)
 = 0$ by (5). Hence (i) is proved.
\end{proof}

\begin{example}
 Given any $a\in E$, there is an ultrafilter $\delta_a$ defined by
 $\delta_a(A)= 1$ if $a\in A$ and $\delta_a(A)=0$ otherwise. Such an
 ultrafilter is called \emph{principal}.
\end{example}
\begin{definition}
 Let $\omega$ be an ultrafilter on $\N$ and let $X$ be a metric space. A
 sequence $(x_n)$ in $X$ is said to converge to $x\in X$ relative to $\omega$
 if, for any neighborhood $V$ of $x$, the set $\{n\in\N : x_n \in V\}$ has
 $\omega$-measure $1$.
\end{definition}
The limit of a sequence, provided it exists, is unique, and we write
$\lim_{n\to \omega} x_n = x$, or $\lim_{\omega} x_n = x$. From this point of
view, the interesting ultrafilters are the non-principal ones: for instance,
any sequence $(x_n)$ converges to $x_k$, relative to the principal ultrafilter
$\delta_k$. The existence of non-principal ultrafilters follows from
Zorn's Lemma (see e.g \cite{Boutopo}, or Exercise I.5.48 in \cite{BriH}). We
shall use the following well-known result implicitly in the text. It ensures
 that any bounded sequence of
 real (or complex) numbers is $\omega$-convergent.
\begin{proposition}[\cite{Boutopo}]
 If $X$ is a compact metric space and $\omega$ is an ultrafilter on $\N$, then
 any sequence $(x_n)$ in $X$ is $\omega$-convergent.
\end{proposition}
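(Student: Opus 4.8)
The plan is to build a nested sequence of closed sets of shrinking diameter, each of which captures an $\omega$-large portion of the sequence, and then to read off the limit as the unique point in their intersection. First I would record the elementary \emph{majority principle}: if $A = B_1 \cup \cdots \cup B_m$ is a finite cover of a set $A \subseteq X$ with $\omega(\{n : x_n \in A\}) = 1$, then $\omega(\{n : x_n \in B_i\}) = 1$ for at least one index $i$. To see this, disjointify the cover by setting $B_i' = B_i \setminus (B_1 \cup \cdots \cup B_{i-1})$; the sets $\{n : x_n \in B_i'\}$ are then pairwise disjoint with union $\{n : x_n \in A\}$, so finite additivity gives $\sum_i \omega(\{n : x_n \in B_i'\}) = 1$, and since each summand lies in $\{0,1\}$ exactly one of them equals $1$.

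Next I would exploit compactness through total boundedness. Starting from $C_0 = X$, which satisfies $\omega(\{n : x_n \in C_0\}) = \omega(\N) = 1$, I would inductively cover all of $X$ by finitely many closed balls of radius $2^{-k}$, intersect these with $C_{k-1}$, and apply the majority principle to select a piece $C_k \subseteq C_{k-1}$ that is closed, has $\operatorname{diam}(C_k) \leq 2^{1-k}$, and still satisfies $\omega(\{n : x_n \in C_k\}) = 1$. This yields a decreasing sequence $C_0 \supseteq C_1 \supseteq \cdots$ of nonempty closed sets, nonempty because $\omega(\emptyset) = 0$ forces $\{n : x_n \in C_k\}$ to be nonempty, whose diameters tend to $0$.

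Then completeness (Cantor's intersection theorem) provides a single point $x$ with $\bigcap_k C_k = \{x\}$. To conclude that $\lim_{\omega} x_n = x$, I would take any neighborhood $V$ of $x$, choose $\varepsilon>0$ with $B(x,\varepsilon) \subseteq V$, and pick $k$ large enough that $\operatorname{diam}(C_k) < \varepsilon$; since $x \in C_k$ this gives $C_k \subseteq B(x,\varepsilon) \subseteq V$, so $\{n : x_n \in V\} \supseteq \{n : x_n \in C_k\}$. Because finite additivity makes $\omega$ monotone (if $A \subseteq B$ then $\omega(B) = \omega(A) + \omega(B \setminus A) \geq \omega(A)$), it follows that $\omega(\{n : x_n \in V\}) = 1$, which is exactly $\omega$-convergence to $x$.

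The only real subtlety, and the step I would watch most carefully, is the bookkeeping in the inductive construction: one must cover all of $X$ by balls of radius $2^{-k}$ and only then intersect with $C_{k-1}$, so that each $C_k$ genuinely nests inside $C_{k-1}$ while remaining one of finitely many pieces to which the majority principle applies. Everything else is a routine combination of finite additivity and the total boundedness of a compact metric space.
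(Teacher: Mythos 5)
Your proof is correct. The paper does not actually prove this proposition --- it states it with a citation to Bourbaki and uses it implicitly --- so there is no in-text argument to compare against. The textbook route is slightly different from yours: one notes that the family of closures $\overline{\{x_n : n\in A\}}$, for $A$ ranging over the sets with $\omega(A)=1$, has the finite intersection property (by property (3) of filters together with $\omega(\emptyset)=0$), hence by compactness admits a common point $x$, and one then verifies that $x$ is the $\omega$-limit using property (4) of ultrafilters. That argument works in any compact topological space, whereas yours leans on total boundedness and Cantor's intersection theorem and is therefore specific to compact \emph{metric} spaces --- which is all the paper needs. Your ``majority principle'' is exactly the finite-additivity pigeonhole the paper itself invokes later (``Since $S$ is finite, there exists $s\in S$ such that $\omega(A_s)=1$'' in the proof of Proposition \ref{pf_diag}), and the rest of your bookkeeping --- intersecting the $2^{-k}$-ball cover with $C_{k-1}$ before selecting, using $\omega(\emptyset)=0$ for nonemptiness, and monotonicity of $\omega$ at the end --- is all sound.
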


We now define ultralimits of metric spaces, essentially as in \cite{KleLe} or
\cite{BriH}, (except that we
allow to ``rescale'' the spaces before to take the limit, as is done in the
construction of asymptotic cones, for instance). Let us consider sequences
$(X_k,d_k,*_k)_{k\in\N}$ of pointed metric spaces and $r=(r_k)$ of positive
numbers. Set
\[
 \Bc_{r} = \left\{x \in \prod_{k\in\N} X_k : \text{ the sequence }
 (r_k d_k(x_k,*_k))_k \text{ is bounded}\right\} \ .
\]
If some group $G$ acts by isometries on the spaces $X_k$, its diagonal action
may not stabilize $\Bc_{r}$. A necessary and sufficient condition is that the
sequence $(r_k d_k(g\cdot *_k,*_k))$ is bounded for any generator $g$ of $G$.
If this condition is fulfilled, we say that $G$ \emph{acts diagonally} on
$\Bc_{r}$.
For any ultrafilter $\omega$ on $\N$, we may endow $\Bc_{r}$ with the
pseudo-distance $d_{\omega,r}(x,y) = \lim_\omega r_k d_k(x_k,y_k)$. If $G$ acts
diagonally on $\mathcal B_r$, the diagonal action is isometric.
\begin{definition}
 Let $\omega$ be some non-principal ultrafilter on $\N$. The \emph{ultralimit}
 (relative to scaling factors $(r_k)$ and to $\omega$) of the sequence
 $(X_k,d_k,*_k)$ is the pointed metric space $(X_{\omega,r},d_{\omega,r},
 *_{\omega,r})$, where $X_{\omega,r}$ is the separation of
 $(\Bc_{r},d_{\omega,r})$ and $*_{\omega,r}$ denotes either the point
 $(*_k)_k \in \mathcal B_r$, or its image in $X_{\omega,r}$.
\end{definition}

Note that if $G$ acts diagonally on $\mathcal B_r$, the
diagonal action induces an isometric action on every ultralimit $X_{\omega,r}$,
which we call again diagonal.
If the sequence $(X_k,d_k)_k$ is constant and if $r_k \to 0$, one gets the
notion of asymptotic cone, due to Gromov \cite{Gro81,Gro93}, and van den Dries
and Wilkie \cite{DrWi}.

\begin{proposition}[Gromov \cite{Gro03}]\label{pf_diag}
 Let $G$ be a finitely generated group acting by isometries on complete metric
 spaces $(X_k,d_k)$ for $k\in\N$. If these actions have no fixed point, then
 there exist scaling factors $r_k > 0$ and base points $*_k \in X_k$ such that:
 \begin{enumerate}
  \item the group $G$ acts diagonally on $\mathcal B_r$;
  \item for any non-principal ultrafilter $\omega$ on $\N$, the diagonal
  action of $G$ on $X_{\omega,r}$ is uniform.
 \end{enumerate}
\end{proposition}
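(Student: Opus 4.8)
The plan is to use Lemma~\ref{keyLemma} to manufacture, for each $k$, a base point $*_k \in X_k$ that is ``far from almost fixed points'', and then to rescale each space so that the quantity $\delta_k(*_k)$ becomes a fixed positive constant. This normalisation is what will force the diagonal action on the ultralimit to be uniform rather than merely to almost have fixed points.

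First I would apply Lemma~\ref{keyLemma} to each action of $G$ on $(X_k,d_k)$. Since each action is assumed to have no fixed point and each $X_k$ is complete, the lemma (applied, say, with $n=1$) yields a point $*_k \in X_k$ such that for all $y \in X_k$ with $d_k(y,*_k) \le \delta_k(*_k)$ one has $\delta_k(y) \ge \delta_k(*_k)/2$, where $\delta_k(x) = \max\{d_k(x,s\cdot x) : s\in S\}$. Because the action has no fixed point we also have $\delta_k(*_k) > 0$, so I may set the scaling factor
\[
 r_k := \frac{1}{\delta_k(*_k)} > 0 \ .
\]
With this choice the rescaled displacement function $\delta_k^{(r)}(x) := r_k \delta_k(x)$ satisfies $\delta_k^{(r)}(*_k) = 1$ for every $k$. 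For each generator $s\in S$ we have $r_k\, d_k(s\cdot *_k, *_k) \le r_k\,\delta_k(*_k) = 1$, so the sequence $(r_k\, d_k(g\cdot *_k,*_k))$ is bounded for every generator; hence $G$ acts diagonally on $\mathcal B_r$, which proves~(1).

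For~(2), fix a non-principal ultrafilter $\omega$ and consider the diagonal action on $X_{\omega,r}$. I would show that the displacement of the base point $*_{\omega,r}$ is bounded below away from $0$, uniformly. Concretely, for any $x = (x_k) \in \mathcal B_r$ I want to bound $\max_{s\in S} d_{\omega,r}(s\cdot x, x) = \lim_\omega \delta_k^{(r)}(x_k)$ from below. The key dichotomy, coming from the rescaled form of Lemma~\ref{keyLemma}, is: for each $k$, either $r_k\, d_k(x_k,*_k) > 1$, or else $d_k(x_k,*_k) \le \delta_k(*_k)$ and so $\delta_k(x_k) \ge \delta_k(*_k)/2$, i.e. $\delta_k^{(r)}(x_k) \ge 1/2$. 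Thus for every $k$ one has $\max\{\, r_k d_k(x_k,*_k),\ \delta_k^{(r)}(x_k)\,\} \ge 1/2$. Taking $\lim_\omega$ and using that $\lim_\omega$ preserves such an inequality, I conclude
\[
 \max\big\{\, d_{\omega,r}(x,*_{\omega,r}),\ \textstyle\max_{s\in S} d_{\omega,r}(s\cdot x,x)\,\big\} \ \ge\ \tfrac12 \ .
\]
Now I would use this to rule out almost fixed points: by Remark~\ref{ppf_tf}, the action almost has fixed points iff there are points $x$ with $\max_{s\in S} d_{\omega,r}(s\cdot x, x)$ arbitrarily small; but for such a point the displayed inequality forces $d_{\omega,r}(x,*_{\omega,r}) \ge 1/2$, which does not yet give a contradiction on its own. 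The clean way around this is to note that the bound $\delta_k^{(r)}(x_k) \ge 1/2$ should in fact hold on a whole rescaled ball of radius $n$ about $*_k$ if I apply Lemma~\ref{keyLemma} with general $n$; taking $n\to\infty$ along a suitable choice makes the ``either'' branch recede to infinity, so that on every bounded region the displacement stays $\ge 1/2$, giving $\inf\{\max_{s\in S} d_{\omega,r}(s\cdot x,x) : x\in X_{\omega,r}\} \ge \tfrac12 > 0$, i.e. uniformity.

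The main obstacle I anticipate is exactly this last point: Lemma~\ref{keyLemma} as stated only controls $\delta$ on the ball $d(y,*_k)\le n\delta(*_k)$, so after rescaling it controls a ball of radius $n$ — a fixed finite radius — about $*_{\omega,r}$, whereas uniformity is a statement about \emph{all} of $X_{\omega,r}$. I expect the correct fix is to let $n$ depend on $k$ (taking $n = n_k \to \infty$), so that the rescaled good ball exhausts any bounded neighbourhood of the base point in the limit; one must then check that the argument bounding $\delta_k^{(r)}$ from below survives passage to $\lim_\omega$ for every bounded sequence $x\in\mathcal B_r$. Verifying that this diagonal choice of $n_k$ is compatible with the boundedness condition defining $\mathcal B_r$, and that the resulting lower bound is genuinely uniform over $X_{\omega,r}$ and not just over a fixed ball, is the delicate step; everything else is a routine application of the preceding lemmas and the definition of $\lim_\omega$.
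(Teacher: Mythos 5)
Your proposal is correct and is essentially the paper's proof: the paper applies Lemma~\ref{keyLemma} with $n=k$ for the $k$-th space, which is exactly the fix ($n_k\to\infty$) you arrive at in your last paragraph, and the ``delicate step'' you flag does go through for the reason you suspect --- any $(x_k)\in\mathcal B_r$ has $r_k d_k(x_k,*_k)\le M$ for some $M$, so for every $k\ge M$ one gets $d_k(x_k,*_k)\le k\,\delta_k(*_k)$, hence $r_k\delta_k(x_k)\ge 1/2$ on a cofinite (thus $\omega$-full) set, and a pigeonhole over the finite generating set $S$ yields some $s$ with $d_{\omega,r}(s\cdot x,x)\ge 1/2$. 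The only adjustment needed is to drop the initial $n=1$ application and choose the base points with $n=k$ from the start.
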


Korevaar and Schoen \cite{KoSch} used the same idea to rescale spaces, and then
take a limit, to produce uniform actions from actions without fixed points on
CAT(0) spaces. On the other hand their construction of limits uses properties
of CAT(0) spaces.

\begin{proof}
 Let $S$ be a finite generating set of $G$ and set $\delta_k(x) =
 \max\{d(x,sx): s\in S\}$ for all $x\in X_k$. By Lemma \ref{keyLemma}, we
 obtain points $*_k \in X_k$ for all $k\in \N$ such that:
\[
  \text{for all } y_k\in X_k, \quad d(y_k,*_k) \leq k\delta_k(*_k) \
  \Longrightarrow \ \delta_k(y_k) \geq \frac{\delta_k(*_k)}{2} \ .
 \]
 We now set $r_k = \delta_k(*_k)^{-1}$, which are well-defined since the
 actions have no fixed point. Thus, we have $r_k d_k(*_k,s\cdot *_k) \leq 1$
 for all $k$, so that (1) is satisfied.

 To prove (2), we consider some non-principal ultrafilter $\omega$ on $\N$.
 For any $y=(y_k)\in \mathcal B_r$, the sequence $(d(y_k,*_k)/\delta_k(*_k))_k$
 is bounded. Hence, for $k$ sufficiently large, one has
 $d(y_k,*_k) \leq k\delta_k(*_k)$, which implies
 $\delta_k(y_k) \geq \delta_k(*_k)/2$.

 For all $s\in S$, set now $A_s = \{k\in \N : r_kd_k(y_k,sy_k) \geq 1/2\}$.
 The former argument implies $k\in\bigcup_{s\in S} A_s$ for $k$ large enough,
 whence $\omega\big(\bigcup_{s\in S} A_s\big) = 1$. Since $S$ is finite, there
 exists $s\in S$ such that $\omega(A_s)=1$, which shows that
 $d_{\omega,r}(y,sy) = \lim_\omega r_kd_k(y_k,sy_k) \geq 1/2$.
\end{proof}

Let us now make Theorem \ref{IntUltrSansPPF} precise.

\begin{theorem}[Gromov \cite{Gro03}]\label{UltrSansPPF}
 Let $(G_k,S_k)_{k\in\N}$ be a sequence of marked groups converging to $(G,S)$
 in the space $\mathcal G_n$. If each group $G_k$ acts without fixed point on
 a complete metric space $(X_k,d_k)$, then there exists scaling factors
 $r_k > 0$ and base points $*_k \in X_k$ such that:
 \begin{enumerate}
  \item the free group $\F_n$ acts diagonally on $\mathcal B_r$;
  \item for any non-principal ultrafilter $\omega$ on $\N$, the diagonal action
  of $\F_n$ on the ultralimit $X_{\omega,r}$ is uniform;
  \item the diagonal action factors through the epimorphism $\F_n \to G$
  associated to $(G,S)$.
 \end{enumerate}
\end{theorem}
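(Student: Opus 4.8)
The plan is to reduce (1) and (2) to Proposition~\ref{pf_diag} applied to the free group $\F_n$, and to extract the factorisation property (3) from the convergence $(G_k,S_k)\to(G,S)$.

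First I would recast the hypotheses in terms of $\F_n$. Let $\phi_k\colon\F_n\to G_k$ and $\phi\colon\F_n\to G$ be the canonical epimorphisms, with kernels $N_k$ and $N$. Pulling back the $G_k$-action on $X_k$ along $\phi_k$ yields an isometric $\F_n$-action on $X_k$; since $\phi_k$ is surjective, a point is fixed by this $\F_n$-action precisely when it is fixed by $G_k$, so the $\F_n$-action on $X_k$ has no fixed point. Thus $\F_n$ is a finitely generated group acting without fixed point on each complete space $(X_k,d_k)$, and Proposition~\ref{pf_diag}, applied to $\F_n$ with its standard basis as generating set, directly produces scaling factors $r_k>0$ and base points $*_k\in X_k$ for which $\F_n$ acts diagonally on $\mathcal B_r$ and for which, for every non-principal ultrafilter $\omega$, the induced diagonal action on $X_{\omega,r}$ is uniform. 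This establishes (1) and (2) at once, with no new work.

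The content specific to this theorem lies in (3), and it is the only place where convergence in $\mathcal G_n$ enters. Fix $w\in N=\ker(\phi)$; I must show that $w$ acts as the identity on $X_{\omega,r}$. By the characterisation of convergence in $\mathcal G_n$ recalled in Section~\ref{defs}, the fact that $w\in N$ forces $w\in N_k$ for all $k$ large enough, say $k\geq k_0$. For such $k$ the element $\phi_k(w)$ is trivial in $G_k$, hence $w$ fixes every point of $X_k$; in particular $d_k(w\cdot y_k,y_k)=0$ for every $y=(y_k)\in\mathcal B_r$ whenever $k\geq k_0$.

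It then remains to pass to the ultralimit. For fixed $w\in N$ and $y\in\mathcal B_r$, the set $\{k:k\geq k_0\}$ is cofinite, hence has $\omega$-measure $1$ since $\omega$ is non-principal; as $r_kd_k(w\cdot y_k,y_k)=0$ on this set, we obtain $d_{\omega,r}(w\cdot y,y)=\lim_\omega r_kd_k(w\cdot y_k,y_k)=0$. Therefore $w$ acts trivially on $X_{\omega,r}$, and the diagonal $\F_n$-action factors through $\F_n/N=G$, which is (3). The only genuinely new ingredient beyond Proposition~\ref{pf_diag} is the eventual containment $w\in N_k$; this is exactly what convergence in the space of marked groups supplies, and it is the mechanism by which the limit group $G$, rather than merely $\F_n$, comes to act on the ultralimit. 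I expect this to be the crux of the argument, even though the verification itself is short.
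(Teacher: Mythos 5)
Your argument is correct and follows the same route as the paper's proof: reduce (1) and (2) to Proposition~\ref{pf_diag} applied to the pulled-back $\F_n$-actions, and derive (3) from the eventual containment $w\in N_k$ supplied by convergence in $\mathcal G_n$. The only difference is that you spell out two details the paper leaves implicit (that surjectivity of $\phi_k$ ensures the $\F_n$-actions are still fixed-point-free, and that a cofinite set has $\omega$-measure $1$ for a non-principal $\omega$), which is harmless.
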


\begin{proof}
 Let $N_k$ and $N$ be the normal subgroups of $F_n$ associated with $(G_k,S_k)$
 and $(G,S)$ respectively. The $G_k$-actions on the spaces $X_k$ give
 $\F_n$-actions which are trivial on $N_k$. Proposition \ref{pf_diag} gives then
 scaling factors $r_k > 0$ and points $*_k \in X_k$ such that conditions (1)
 and (2) are fulfilled.

 To prove (3), it suffices to see that the diagonal action is trivial on $N$.
 Let us take $g\in N$. As $N_k \to N$, we have $g\in N_k$ for $k$ sufficiently
 large. Take now $y=(y_k) \in \Bc_{\omega,r}$. Since $g\cdot y_k = y_k$ for $k$
 large enough, we get $\lim_\omega r_k d_k(y_k,g y_k) = 0$, whence
 $d_{\omega,r}(y,g y) = 0$. The subgroup $N$ acts trivially on $X_{\omega,r}$,
 as desired.
\end{proof}

%%%%%%%%%%%%%%%%%%%%%%%%%%%%%%%%%%%%%%%%%%%%%%%%%%%%%%%%%%%%%%%%%%%%%%%%%%%%%%%%%%%%%%%%%%%%%

\section{Applications to fixed point properties}\label{Thms}
In this Section, we apply Theorem \ref{IntUltrSansPPF} (or Theorem
\ref{UltrSansPPF}) to obtain results about fixed point properties on groups.
Another ingredient is to identify classes of metric spaces which are stable by
ultralimits. Let us record two easy observations:

\begin{remark}\label{act_compl}
 When a group acts isometrically on a metric space, the action can be extended
 to the completion. Moreover, if the action is uniform, then so is the
 extension.
\end{remark}
\begin{remark}\label{ouv_n}
 The subsets $\Gc_n$ being an open cover of $\Gc$, a property defines an open
 set in $\Gc$ if and only if it defines an open set in every $\Gc_n$.
\end{remark}

\subsection{Fixed points in (affine) Hilbert spaces.}
We consider (affine) Hilbert spaces over $\R$, by forgetting the complex
structure and replacing the inner-product by its real part, if necessary.
For such spaces, it is
well-known that any isometry is an affine map. In Example \ref{Prop_ppf}, we
exhibited an isometric action on a Hilbert space without fixed point, which
almost has fixed points. On the other hand, Theorem \ref{IntUltrSansPPF} allows
to unify the proof of Theorems \ref{IntOuv_T} and \ref{Int_T_ppf}, that we now
recall.
\begin{theorem}[Shalom \cite{Sha}]\label{Ouv_T}
 Property (FH) defines an open subset in $\Gc$.
\end{theorem}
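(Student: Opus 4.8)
The plan is to show the complement is closed; by Remark~\ref{ouv_n} it suffices to prove that Property (FH) defines an open subset of $\Gc_n$ for each fixed $n$. I argue by contradiction: suppose $(G,S)\in\Gc_n$ has Property (FH) but lies in the closure of the marked groups \emph{without} (FH). Then there is a sequence $(G_k,S_k)_{k\in\N}$ in $\Gc_n$, each failing (FH), with $(G_k,S_k)\to(G,S)$. By the very definition of (FH), each $G_k$ admits an isometric action \emph{without fixed point} on some affine Hilbert space $(\mathcal H_k,d_k)$; such spaces are complete.

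Now I would feed these actions into Theorem~\ref{UltrSansPPF}. Fixing a non-principal ultrafilter $\omega$, it produces scaling factors $r_k>0$, base points $*_k\in\mathcal H_k$, and a diagonal isometric action of $\F_n$ on the ultralimit $X_{\omega,r}$ which is \emph{uniform} and factors through the epimorphism $\F_n\to G$ attached to $(G,S)$. Hence $G$ itself acts uniformly on $X_{\omega,r}$; as a uniform action cannot have a fixed point, this $G$-action is fixed-point-free. Note that $X_{\omega,r}$ is non-empty, since it contains the base point $*_{\omega,r}$.

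The decisive step is to verify that $X_{\omega,r}$ is again an affine Hilbert space, for then the uniform $G$-action contradicts Property (FH) of $G$ and finishes the proof. I would establish this through the parallelogram (Apollonius median) identity, which characterizes affine Hilbert spaces among complete metric spaces admitting midpoints: for all $x,y$ there is a point $m$ with
\[
 d(z,x)^2 + d(z,y)^2 = 2\,d(z,m)^2 + \tfrac12\,d(x,y)^2 \qquad\text{for all } z.
\]
Each rescaled space $(\mathcal H_k,\,r_k d_k)$ is again an affine Hilbert space, so it satisfies this identity with $m_k$ the midpoint of $x_k$ and $y_k$. Since the ultralimit distance $d_{\omega,r}$ is the $\omega$-limit of the scaled distances, and $\omega$-limits respect this algebraic relation among squared distances, the identity should pass to $X_{\omega,r}$, with midpoint given by the class of $(m_k)_k$.

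The main obstacle is precisely this stability point: one must check carefully that the candidate midpoints $(m_k)_k$ lie in $\mathcal B_r$ and thus define a genuine point of $X_{\omega,r}$, and that the squared-distance identity survives passage to the $\omega$-limit. Granting that, and recalling that every ultralimit is complete, $X_{\omega,r}$ is a complete metric space satisfying the median identity, hence an affine Hilbert space; the contradiction with Property (FH) of $G$ is then immediate.
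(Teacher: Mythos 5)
Your overall architecture coincides with the paper's: pass to the contrapositive, feed the fixed-point-free actions into Theorem~\ref{UltrSansPPF} to obtain a uniform $G$-action on an ultralimit, and conclude by showing that the ultralimit is again an affine Hilbert space. The genuine gap lies in that last step. The metric characterization you invoke is false as stated: any \emph{closed convex subset} $C$ of a Hilbert space --- a closed ball, a segment, a single point --- is a complete metric space admitting midpoints and satisfying
\[
 d(z,x)^2+d(z,y)^2 \;=\; 2\,d(z,m)^2+\tfrac12\,d(x,y)^2 \qquad\text{for all } z\in C,
\]
yet it is not an affine Hilbert space. Incidentally, the two points you single out as the main obstacles are the easy ones: $r_kd_k(m_k,*_k)\le r_kd_k(m_k,x_k)+r_kd_k(x_k,*_k)$ shows $(m_k)_k\in\mathcal B_r$, and $\omega$-limits of bounded real sequences respect sums and products, so the identity does pass to the limit. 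What this argument delivers, at best, is that $X_{\omega,r}$ is isometric to a closed convex subset of a Hilbert space; a uniform $G$-action on such a subset does not formally contradict Property (FH), which concerns actions on affine Hilbert spaces. So as written the contradiction is not reached.

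The gap is reparable, but it requires an additional idea. One option is to extend the isometric $G$-action from the convex set to its closed affine hull (isometries of a convex subset of a Hilbert space extend uniquely to affine isometries of the closed affine span, since midpoints are metrically determined), apply (FH) there, and push the fixed point back into the convex set via the nearest-point projection, which is $G$-equivariant. Another is to verify in addition that geodesics of $X_{\omega,r}$ extend to complete lines --- e.g.\ the class of $(2y_k-x_k)_k$ lies in $\mathcal B_r$ and doubles the segment from $x$ to $y$ --- and then prove, not merely cite, that a complete, geodesically complete space satisfying the median identity is an affine Hilbert space. The paper's Lemma~\ref{Hilb} sidesteps both detours by constructing the linear structure directly: it takes the ultralimit of the underlying vector spaces $(\Hc_k^0,\Norm{\cdot}_k,0)$, checks that coordinatewise operations and $\PScal{u}{v}:=\lim_\omega\PScal{r_ku_k}{r_kv_k}$ make it a Hilbert space inducing $d_{\omega,r}$, and exhibits $X_{\omega,r}$ as an affine space over it. You should either adopt that route or carry out one of the repairs above explicitly.
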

\begin{theorem}[Korevaar-Schoen \cite{KoSch}, Shalom \cite{Sha}]\label{T_ppf}
A finitely generated group $G$ has Property (FH) if and only if every isometric
$G$-action on a Hilbert space almost has fixed points.
\end{theorem}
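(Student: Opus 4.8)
The plan is to treat the two implications separately, the forward one being immediate and the reverse one being the substance. If $G$ has Property (FH), then every isometric $G$-action on an affine Hilbert space has a global fixed point; since an action with a fixed point certainly almost has fixed points, every such action almost has fixed points, which is the required conclusion.

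For the converse I would argue by contraposition, and here Proposition \ref{pf_diag} does the real work. Assume $G$ does not have Property (FH), so that there is an isometric $G$-action without fixed point on some affine Hilbert space $H$. As $H$ is complete, I would apply Proposition \ref{pf_diag} to the \emph{constant} sequence $X_k = H$, each copy carrying the given action. This produces scaling factors $r_k > 0$ and basepoints $*_k \in H$ such that $G$ acts diagonally on $\mathcal{B}_r$ and, for every non-principal ultrafilter $\omega$, the induced diagonal action on the ultralimit $X_{\omega,r}$ is \emph{uniform}.

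The key step is to check that $X_{\omega,r}$ is again an affine Hilbert space. Writing $V$ for the space of translations of $H$ and setting $u_k = x_k - *_k$, the assignment $(x_k) \mapsto (u_k)$ identifies $\mathcal{B}_r$ with the real vector space of sequences $(u_k)$ in $V$ for which $(r_k\Norm{u_k})_k$ is bounded. On this space I would define $\langle (u_k),(v_k)\rangle_\omega = \lim_\omega r_k^2 \langle u_k, v_k\rangle$; the $\omega$-limit exists because Cauchy--Schwarz bounds the terms by $(r_k\Norm{u_k})(r_k\Norm{v_k})$. This is a positive semidefinite symmetric bilinear form, and since $\omega$-limits are multiplicative on bounded sequences its associated seminorm is $(u_k) \mapsto \lim_\omega r_k\Norm{u_k}$, which is exactly the seminorm $d_{\omega,r}(\,\cdot\,,*_{\omega,r})$. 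Passing to the separation $X_{\omega,r}$ therefore yields a genuine inner product inducing $d_{\omega,r}$; as ultralimits of complete spaces are complete, $X_{\omega,r}$ is a Hilbert space, and the diagonal isometric action is then automatically affine.

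It follows that the uniform diagonal action on $X_{\omega,r}$ is an isometric $G$-action on an affine Hilbert space that does not almost have fixed points, contradicting the hypothesis; hence $G$ has Property (FH). The main obstacle is precisely this identification of $X_{\omega,r}$: one must verify that the Hilbert structure survives both the rescaling and the ultralimit, the crux being that the $\omega$-limit inner product is well defined and recovers $d_{\omega,r}$, after which completeness of the ultralimit and the automatic affineness of Hilbert-space isometries finish the argument. Equivalently, one could invoke the parallelogram identity, a purely metric relation among points and midpoints that passes to $\omega$-limits, but constructing the inner product directly is cleaner.
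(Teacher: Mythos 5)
Your proposal is correct and follows essentially the same route as the paper: the paper proves Theorems \ref{Ouv_T} and \ref{T_ppf} together by applying Theorem \ref{IntUltrSansPPF} (whose content for a constant sequence is exactly Proposition \ref{pf_diag}) to the constant sequence $G_k=G_0$, $\Hc_k=\Hc_0$, and then invokes Lemma \ref{Hilb} to see that the ultralimit is again an affine Hilbert space. Your direct construction of the limit inner product via $\lim_\omega\PScal{r_ku_k}{r_kv_k}$ is precisely the argument of Lemma \ref{Hilb}.
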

Korevaar and Schoen also prove the following: if $\Gamma$ is a finitely
generated group with Property (FH) and if $X$ is a geodesically complete
CAT(0) space with curvature bounded from below, then any non-uniform
isometric action of $\Gamma$ on $X$ has a fixed point. To do this, they show
that some limits (in their sense) of geodesically complete CAT(0) space with
curvature bounded from below are Hilbert spaces (compare with Lemma
\ref{Hilb} below).

Let us now mention a result by Mok in the same vein \cite{Mok}: if $M$ is a
compact riemannian manifold and if $G = \pi_1(M)$, then $G$ has property (T) if
and only if, for any irreducible unitary representation $\pi$ of $G$, there is
no non-zero $E_\pi$-valued harmonic 1-form (where $E_\pi$ is the locally
constant Hilbert bundle on $M$ induced from $\pi$).
As this text was almost finished, we saw in the Appendix of \cite{Kle} a ``weak
version of some results in \cite{FiMa}'', which implies Theorem \ref{T_ppf}.
The proof in \cite{Kle} uses ultralimits of Hilbert spaces in a very similar
way as in this note (with less details).

For proofs of Theorems \ref{Ouv_T} and \ref{T_ppf}, we use the following
easy observation.
\begin{lemma}\label{Hilb}
 Any ultralimit of affine Hilbert spaces is an affine Hilbert space.
\end{lemma}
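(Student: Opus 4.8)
My plan is to build the Hilbert structure on the ultralimit directly, by taking an $\omega$-limit of (rescaled) inner products, rather than verifying an abstract metric characterization. Since each $X_k$ is an affine Hilbert space and the ultralimit construction already supplies basepoints $*_k$, I will vectorize $X_k$ at $*_k$, obtaining a real Hilbert space $(H_k,\langle\cdot,\cdot\rangle_k)$ with $d_k(x,y)=\Norm{x-y}_k$. I then rescale, setting $\langle u,v\rangle'_k := r_k^2\langle u,v\rangle_k$ and $\Norm{u}'_k := r_k\Norm{u}_k$, so that $\Norm{x_k-y_k}'_k = r_k d_k(x_k,y_k)$; note that $\Bc_r$ is precisely the linear subspace of $\prod_k H_k$ consisting of sequences bounded in the norms $\Norm{\cdot}'_k$. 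On $\Bc_r$ I define
\[
 \langle x,y\rangle_\omega := \lim_\omega \langle x_k,y_k\rangle'_k \ .
\]

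First I would check this is well defined: by Cauchy--Schwarz one has $|\langle x_k,y_k\rangle'_k| \leq \Norm{x_k}'_k\Norm{y_k}'_k = \big(r_k d_k(x_k,*_k)\big)\big(r_k d_k(y_k,*_k)\big)$, which is bounded whenever $x,y\in\Bc_r$, so the $\omega$-limit of this bounded real sequence exists by the Proposition on $\omega$-convergence of bounded sequences. Symmetry and bilinearity of $\langle\cdot,\cdot\rangle_\omega$ then follow from those of each $\langle\cdot,\cdot\rangle'_k$, together with the fact that $\lim_\omega$ respects finite sums and scalar multiples of bounded sequences (and that $\Bc_r$ is stable under these operations). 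The crucial compatibility is
\[
 \langle x-y,x-y\rangle_\omega = \lim_\omega \big(\Norm{x_k-y_k}'_k\big)^2
 = \big(\lim_\omega r_k d_k(x_k,y_k)\big)^2 = d_{\omega,r}(x,y)^2 \ ,
\]
so the seminorm attached to $\langle\cdot,\cdot\rangle_\omega$ coincides with the translation-invariant pseudo-distance $d_{\omega,r}$.

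Consequently the kernel $\{x\in\Bc_r : \langle x,x\rangle_\omega = 0\}$ is exactly $\{x : d_{\omega,r}(x,*_{\omega,r})=0\}$, so passing to the separation $X_{\omega,r}$ turns $\langle\cdot,\cdot\rangle_\omega$ into a genuine (positive-definite, symmetric, bilinear) inner product on $X_{\omega,r}$ whose induced distance is $d_{\omega,r}$, and whose origin is $*_{\omega,r}$. Since ultralimits of metric spaces are complete, $X_{\omega,r}$ is then a complete real inner product space, i.e.\ a Hilbert space, and in particular an affine Hilbert space, which is the claim.

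The part requiring the most care is verifying that $\langle\cdot,\cdot\rangle_\omega$ is a \emph{bona fide} inner product: the existence of the limit rests on the Cauchy--Schwarz boundedness estimate above, and positive-definiteness only appears after separation, via the identification of the induced seminorm with $d_{\omega,r}$. Everything else (bilinearity, symmetry, the fact that $\Bc_r$ is a linear subspace, and completeness of the ultralimit) is routine, so the heart of the argument is simply that the scaled inner products remain uniformly controlled and therefore survive the passage to the $\omega$-limit.
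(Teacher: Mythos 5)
Your proof is correct and follows essentially the same route as the paper: both define the inner product on the ultralimit as $\lim_\omega$ of the $r_k^2$-rescaled inner products, check that the resulting seminorm is $d_{\omega,r}$, and invoke completeness of ultralimits. The only (cosmetic) difference is that you vectorize each $X_k$ at $*_k$ up front so that $\Bc_r$ itself is linear and the separation is directly a Hilbert space, whereas the paper takes a separate ultralimit of the underlying vector spaces pointed at $0$ and then exhibits the affine ultralimit as an affine space over it.
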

\begin{proof}
 Let us consider a sequence $(\Hc_k,d_k, *_k)_{k\in\N}$ of (pointed) affine
 Hilbert spaces, scaling factors $(r_k)_{k\in\N}$ and some non-principal
 ultrafilter $\omega$ on $\N$. We denote by $\Hc_k^0$ the (Hilbert) vector
 space under $\Hc_k$. Let $(\Hc_{\omega,r}^0,d_{\omega,r},0_{\omega,r})$ be
 the ultralimit of the sequence $(\Hc_k^0,||.||_k, 0)$, relative to $(r_k)$
 and to $\omega$. Then, $\Hc_{\omega,r}^0$ is a vector space with respect to
 operations  $(u_k) + (v_k)  :=  (u_k + v_k)$ and $\lambda \cdot (v_k) :=
 (\lambda\cdot v_k)$, and the formula $\PScal{u}{v} := \lim_\omega
 \PScal{r_k u_k}{r_k v_k}$ defines a bilinear form on $\Hc_{\omega,r}^0$.
 Moreover, for all $u,v\in \Hc_{\omega,r}^0$, we have:
 \[
  d_{\omega,r}(u,v)^2 = \lim_{k\to\omega} d_k(r_k u_k, r_k v_k)^2 =
  \lim_{k\to\omega} \PScal{r_k (u_k - v_k)}{r_k (u_k - v_k)} =
  \PScal{u-v}{u-v} \ ,
 \]
 so that $\Hc_{\omega,r}^0$ is an inner-product space. Since any ultralimit
 of metric spaces is complete\footnote{See e.g. Lemma 2.4.2 in \cite{KleLe} or Lemma I.5.53 in
 \cite{BriH}}, $\Hc_{\omega,r}^0$ is a Hilbert space.

 Finally, we consider the ultralimit $(\Hc_{\omega,r},d_{\omega,r},
 *_{\omega,r})$ of the sequence $(\Hc_k,d_k, *_k)$, relative to $(r_k)$ and
 to $\omega$. The action $(u_k)+(x_k) := (u_k+x_k)$ turns it into an affine
 space over $\Hc_{\omega,r}^0$. Hence it is an affine Hilbert space.
\end{proof}

\begin{proof}[Proof of Theorems \ref{Ouv_T} and \ref{T_ppf}]
 Let $(G_k)$ be a sequence in $\Gc$ which converges to some $G\in\Gc$. As the
 subspaces $\Gc_n$ form an open cover of $\Gc$, we find $n\in\N^*$ such that
 $G,G_k\in\Gc_n$. Assuming that every $G_k$ admits an isometric action without
 fixed point on some affine Hilbert space $\Hc_k$, Theorem \ref{IntUltrSansPPF}
 gives then an ultralimit $\Hc = \Hc_{\omega,r}$ on which $G$ acts uniformly.
 Moreover, $\Hc$ is an affine Hilbert space by Lemma~\ref{Hilb}.

 This proves Theorem \ref{Ouv_T}. Moreover, if we specialize to the case
 $G_k=G_0$ and $\Hc_k = \Hc_0$ for all $k$, it also proves the non-trivial
 part of Theorem \ref{T_ppf}.
\end{proof}

\subsection{Fixed points in (complete) $\R$-trees.}

Let us recall that $\R$-trees have been invented by Tits \cite{Tits}
\footnote{Unlike the definition we follow, Tits required $\R$-trees to be 
complete.}. Let us also recall (see e.g. Lemmata 1.2.6 and 2.4.3 in 
\cite{Chi}) that a metric
space $(X,d)$ is an \emph{$\R$-tree} if and only if the following conditions
both hold:
\begin{enumerate}
 \item it is \emph{geodesic}, that is, for any $x,y \in X$, there exists a
 map $c:[0, \ell]\to X$ such that $c(0) = x$, $c(\ell) = y$ and
 $d(c(t),c(t')) = |t-t'|$ for all $t,t' \in [0, \ell]$;
 \item it is \emph{$0$-hyperbolic}, that is, for all $x,y,z,t \in X$:
 \[
  d(x,y)+d(z,t) \leq \max\{d(x,z) + d(y,t) , d(y,z) + d(x,t)  \} \ .
 \]
\end{enumerate}
A map $c$ as in point (1) is called a \emph{geodesic} from $x$ to $y$. In what
follows, we shall need the fact that any ultralimit of $\R$-trees is an
$\R$-tree. One may argue by saying that the ultralimit is a quotient of some
subtree of a $\Lambda$-tree (where $\Lambda$ is the ultrapower of $\R$ with
respect to $\omega$), in which we identify points at infinitesimal distance.
However, we prefer a more pedestrian way.
\begin{lemma}\label{GeodHyp}
\begin{enumerate}
 \item Any ultralimit of geodesic spaces is a geodesic space;
 \item Any ultralimit of $0$-hyperbolic spaces is a $0$-hyperbolic space.
\end{enumerate}
\end{lemma}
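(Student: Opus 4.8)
The plan is to prove both parts directly from the definition of the ultralimit, using only the fact that the pseudo-distance $d_{\omega,r}$ is obtained as an $\omega$-limit of the rescaled distances and that $\omega$-limits commute with finite sums, maxima, and the order relation on $\R$. Throughout I fix sequences $(X_k,d_k,*_k)$ of pointed metric spaces, scaling factors $r=(r_k)$, a non-principal ultrafilter $\omega$, and I write points of the ultralimit $X_{\omega,r}$ as (images of) sequences $x=(x_k)\in\Bc_r$. The recurring technical remark is this: if $(a_k)$ and $(b_k)$ are bounded sequences of reals with $a_k\le b_k$ for $\omega$-almost every $k$, then $\lim_\omega a_k\le\lim_\omega b_k$; and $\lim_\omega$ is additive and respects $\max$ over a fixed finite index set. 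These are standard properties of $\omega$-limits of bounded real sequences, and I would invoke them without reproving them.

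For part (2), the $0$-hyperbolic inequality, the argument is essentially a term-by-term passage to the limit. Given four points $x,y,z,t\in X_{\omega,r}$, represented by $(x_k),(y_k),(z_k),(t_k)$, each space $X_k$ being $0$-hyperbolic gives, for every $k$,
\[
 r_k d_k(x_k,y_k)+r_k d_k(z_k,t_k) \leq \max\big\{\, r_k d_k(x_k,z_k)+r_k d_k(y_k,t_k),\ r_k d_k(y_k,z_k)+r_k d_k(x_k,t_k)\,\big\}.
\]
Taking $\lim_\omega$ of both sides, the left-hand side converges to $d_{\omega,r}(x,y)+d_{\omega,r}(z,t)$. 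For the right-hand side I would bound the $\max$ by observing that $\lim_\omega \max\{u_k,v_k\}=\max\{\lim_\omega u_k,\lim_\omega v_k\}$ whenever both limits exist, which holds here since all sequences involved are bounded (they are dominated by sums of base-point distances). Hence the limit of the right-hand side equals the $\max$ of the two limits, giving exactly the $0$-hyperbolic inequality for $d_{\omega,r}$. All quantities are well-defined because each point lies in $\Bc_r$ and the triangle inequality keeps the relevant sequences bounded.

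Part (1), geodesy, is where the real work lies, since I must \emph{produce} a geodesic in the limit rather than merely pass an inequality to the limit. Given $x=(x_k)$ and $y=(y_k)$ in $X_{\omega,r}$, set $\ell=d_{\omega,r}(x,y)=\lim_\omega r_k d_k(x_k,y_k)$ and write $\ell_k=r_k d_k(x_k,y_k)$. In each $X_k$ choose a geodesic $c_k:[0,\ell_k]\to X_k$ from $x_k$ to $y_k$. The natural candidate limit geodesic $c:[0,\ell]\to X_{\omega,r}$ is defined by reparametrizing: for $t\in[0,\ell]$ I would set $c(t)$ to be the class of the sequence $\big(c_k(t_k)\big)_k$, where $t_k\in[0,\ell_k]$ is chosen so that $\lim_\omega t_k=t$ (for instance $t_k=\min\{t,\ell_k\}$, or $t_k=t\cdot\ell_k/\ell$ when $\ell>0$). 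I then need to check three things: that each such sequence lies in $\Bc_r$, which follows since $r_k d_k(c_k(t_k),*_k)\le r_k d_k(c_k(t_k),x_k)+r_k d_k(x_k,*_k)\le t_k\cdot r_k+\dots$ is bounded; that $c(0)=x$ and $c(\ell)=y$, which is immediate from the parametrization; and the isometry condition $d_{\omega,r}(c(t),c(t'))=|t-t'|$, which reduces to computing $\lim_\omega r_k d_k(c_k(t_k),c_k(t'_k))=\lim_\omega r_k|t_k-t'_k|=|t-t'|$ using the fact that $c_k$ is an isometric embedding on its domain.

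The main obstacle I anticipate is the degenerate case $\ell=0$ (when $x=y$, the constant path works, so this is trivial) balanced against the care needed when $\ell>0$ but individual $\ell_k$ may be small, zero, or large relative to $t$: the reparametrization $t_k$ must be chosen so that $t_k\le\ell_k$ always holds, so that $c_k(t_k)$ is defined, while still satisfying $\lim_\omega t_k=t$ and $\lim_\omega r_k|t_k-t'_k|=|t-t'|$. The cleanest fix is to use the linear reparametrization $t_k=(t/\ell)\,\ell_k$ for $t\in[0,\ell]$ (valid $\omega$-almost everywhere once $\ell>0$), which automatically keeps $t_k\in[0,\ell_k]$ and gives $r_k|t_k-t'_k|=r_k\,\ell_k\,|t-t'|/\ell$, whose $\omega$-limit is exactly $|t-t'|$ since $\lim_\omega r_k\ell_k=\ell$. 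With this choice every verification becomes a one-line $\omega$-limit computation, and the lemma follows.
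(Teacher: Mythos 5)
Your proof is correct and follows essentially the same route as the paper: for (1) the identical linear reparametrization $t\mapsto t\,\ell_k/\ell$ of chosen geodesics $c_k$, and for (2) passing the pointwise $0$-hyperbolicity inequality to the $\omega$-limit (you do this via $\lim_\omega\max\{u_k,v_k\}=\max\{\lim_\omega u_k,\lim_\omega v_k\}$ where the paper runs an explicit $\varepsilon$-argument, but it is the same computation). One notational slip to fix: you set $\ell_k=r_k d_k(x_k,y_k)$, yet your final formulas ($\lim_\omega r_k\ell_k=\ell$, the domain $[0,\ell_k]$ of the genuine geodesic $c_k$ in $(X_k,d_k)$, and the computation $r_k|t_k-t'_k|=r_k\ell_k|t-t'|/\ell\to|t-t'|$) all require the unrescaled $\ell_k=d_k(x_k,y_k)$, as in the paper.
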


This Lemma is easy. For instance, part (1) is an exercise in \cite{BriH}. We
nevertheless give a proof for completeness.

\begin{proof}
Let us consider some non-principal ultrafilter $\omega$, pointed metric spaces
$(X_k,d_k,*_k)$, and scaling factors $r_k$, for $k\in\N$.

 (1) Assume the spaces $X_k$ are geodesic and take $x\neq y\in X_{\omega,r}$,
 represented by elements $(x_k), (y_k)$ in $\mathcal B_r$. We set
 $\ell_k = d_k(x_k,y_k)$ and consider geodesics $c_k:[0,\ell_k] \to X_k$ from
 $x_k$ to $y_k$ (note that we may assume $x_k\neq y_k$ for all $k$). Setting
 $\ell = \lim_\omega r_k\ell_k = d_{\omega,r}(x,y)$, we define a map
 \[
  c \ : \ [0, \ell] \to \mathcal B_r \ ; \ t \mapsto
  \left(c_k\left( t \frac{\ell_k}{\ell} \right)\right)_{k\in\N}
 \]
 Then $d(c(t),c(t')) = \lim_\omega r_k d(c_k(t\ell_k/\ell), c_k(t'\ell_k/\ell))
 = \lim_\omega r_k(\ell_k/\ell)\cdot |t-t'| = |t-t'|$ for all $t,t'\in[0,\ell]$.
 Hence $c$ is a geodesic from $x$ to $y$.

 (2) Assume the spaces $X_k$ are $0$-hyperbolic and take $x,y,z,t\in
 X_{\omega,r}$, which are represented by elements $(x_k), (y_k), (z_k), (t_k)$
 in $\mathcal B_r$. Fixing $\varepsilon>0$, we have
 \[
  \omega\left(\left\{ k\in\N :
  \begin{array}{ccc}
    r_k d_k(x_k,z_k) \leq d_{\omega,r}(x,z) + \frac \varepsilon 2 & , &
    r_k d_k(y_k,t_k) \leq d_{\omega,r}(y,t) + \frac \varepsilon 2 \\
    r_k d_k(y_k,z_k) \leq d_{\omega,r}(y,z) + \frac \varepsilon 2 & , &
    r_k d_k(x_k,t_k) \leq d_{\omega,r}(x,t) + \frac \varepsilon 2
  \end{array}
  \right\}\right) = 1 \ .
 \]
 We now use $0$-hyperbolicity of the spaces $X_k$, which shows that the set
 \[
  \Big\{ k\in\N : \
  r_k d_k(x_k,y_k)+ r_k d_k(z_k,t_k) \leq \max\big\{d_{\omega,r}(x,z) +
  d_{\omega,r}(y,t) + \varepsilon , d_{\omega,r}(y,z) + d_{\omega,r}(x,t)
  + \varepsilon \big \}
  \Big\}
 \]
 has $\omega$-measure $1$.
 Hence, we obtain
 \[
 d_{\omega,r}(x,y)+d_{\omega,r}(z,t) \leq
 \max\big\{d_{\omega,r}(x,z) + d_{\omega,r}(y,t) , d_{\omega,r}(y,z) +
 d_{\omega,r}(x,t)  \big\} + \varepsilon \ .
 \]
 As $\varepsilon$ is arbitrary, this
 shows that the ultralimit $X_{\omega,r}$ is $0$-hyperbolic.
\end{proof}

We now recall and prove Theorem \ref{IntOuv_FRA} of the Introduction.

\begin{theorem}[Culler-Morgan \cite{CM}]\label{Ouv_FRA}
 Property (F$\R$A) defines an open subset in $\Gc$.
\end{theorem}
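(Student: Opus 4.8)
The plan is to prove that Property (F$\R$A) defines an open subset of $\Gc$ by combining Theorem \ref{IntUltrSansPPF} (in its precise form, Theorem \ref{UltrSansPPF}) with Remark \ref{SerreLemma} and the stability results of Lemma \ref{GeodHyp}. By Remark \ref{ouv_n}, it suffices to show that Property (F$\R$A) defines an open subset of $\Gc_n$ for each fixed $n$. I would argue by contradiction: suppose $G \in \Gc_n$ has Property (F$\R$A) but is not an interior point, so that there exists a sequence of marked groups $(G_k,S_k)$ converging to $(G,S)$ in $\Gc_n$ with each $G_k$ \emph{not} having Property (F$\R$A).

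The next step exploits what failure of (F$\R$A) means for finitely generated groups. Each $G_k$ admits an isometric action on a complete $\R$-tree $T_k$ without a global fixed point. Here I would invoke Remark \ref{SerreLemma}: for a finitely generated group, an action on an $\R$-tree without a fixed point is automatically uniform (some generator acts hyperbolically), so these are genuinely non-uniform actions, which is exactly the hypothesis needed to feed into the limiting construction. Applying Theorem \ref{UltrSansPPF} to the sequence of actions of $G_k$ on the complete $\R$-trees $(T_k,d_k)$, I obtain scaling factors $r_k>0$, base points $*_k\in T_k$, and for any non-principal ultrafilter $\omega$ an ultralimit $X_{\omega,r}$ on which $\F_n$ acts uniformly, with the action factoring through the epimorphism $\F_n \to G$. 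Thus $G$ itself acts uniformly on $X_{\omega,r}$.

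It remains to check that $X_{\omega,r}$ is of the right type, namely a complete $\R$-tree. Since each $T_k$ is an $\R$-tree, it is geodesic and $0$-hyperbolic; by Lemma \ref{GeodHyp} the ultralimit $X_{\omega,r}$ is again geodesic and $0$-hyperbolic, hence an $\R$-tree by the characterization recalled before Lemma \ref{GeodHyp}. Moreover every ultralimit of metric spaces is complete, so $X_{\omega,r}$ is a complete $\R$-tree. We have therefore produced a uniform---in particular fixed-point-free---isometric action of $G$ on a complete $\R$-tree, contradicting the assumption that $G$ has Property (F$\R$A). This contradiction shows $G$ is an interior point, completing the proof.

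The main obstacle is really a bookkeeping matter rather than a deep one: one must ensure the hypotheses of Theorem \ref{UltrSansPPF} are met, i.e.\ that each action is fixed-point-free (guaranteed by the failure of (F$\R$A)) and that the trees are complete (so that the key Lemma \ref{keyLemma}, and hence the selection of base points $*_k$, applies). The only subtle point worth flagging is the role of Remark \ref{SerreLemma}: it is what reconciles the possibly weaker statement ``no fixed point'' used as input with the genuinely uniform output, and it is crucial that $G_k$ is finitely generated so that a hyperbolic element exists and the action cannot merely ``almost'' have fixed points. Everything else is a direct assembly of the already-established lemmas.
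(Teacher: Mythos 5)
Your proof is correct and follows essentially the same route as the paper: reduce to $\Gc_n$ via Remark \ref{ouv_n}, feed the fixed-point-free actions on the complete $\R$-trees $T_k$ into Theorem \ref{UltrSansPPF}, and use Lemma \ref{GeodHyp} together with completeness of ultralimits to see that the limit is a complete $\R$-tree on which $G$ acts uniformly, hence without a fixed point. (One small slip: the actions furnished by Remark \ref{SerreLemma} are \emph{uniform}, not ``non-uniform'', and in any case that remark is not needed as input to Theorem \ref{UltrSansPPF}, whose hypothesis is only the absence of a fixed point --- the paper itself only invokes the uniformity observation in a closing comment, to note that the clever choice of base points is superfluous in this case.)
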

\begin{remark}\label{FRA_uncomplete}
 Let $G$ be a \textit{finitely generated} group. Then, $G$ has property (F$\R$A)
 if and only if every $G$-action on an $\R$-tree has a fixed point.
\end{remark}
\begin{proof}
 The completion of an $\R$-tree is an $\R$-tree \cite{Imrich} --- see also
 \cite[Cor~II.1.10]{MoSha} or
 \cite[Thm~2.4.14]{Chi}. Hence, we are done by Remarks \ref{SerreLemma}
 and \ref{act_compl}.
\end{proof}

\begin{proof}[Proof of Theorem \ref{Ouv_FRA}]
 Let $(G_k)$ be a sequence in $\Gc_n$ which converges to some point
 $G\in\Gc_n$, and such that any $G_k$ acts without fixed point on some
 complete $\R$-tree $T_k$. By remark \ref{ouv_n}, it suffices to show that
 $G$ acts without fixed point on some complete $\R$-tree.

 Theorem \ref{IntUltrSansPPF} gives an ultralimit $T=T_{\omega,r}$ of the
 $\R$-trees $T_k$ on which $G$ acts uniformly, and $T$ is an $\R$-tree by
 Lemma \ref{GeodHyp}. Hence, Remark \ref{FRA_uncomplete} concludes the
 proof.\footnote{In fact, use of Remark \ref{FRA_uncomplete} is superfluous, as
 it is known that every ultralimit of metric spaces is complete: see e.g.
 Lemma 2.4.2 in \cite{KleLe} or
 Lemma I.5.53 in \cite{BriH}}
\end{proof}
\begin{remark}
 The last proof does not work for simplicial trees: we used the fact that the
 class of $\R$-trees is closed under ultralimits.
\end{remark}
 In fact, as the referee pointed out, using Theorem \ref{IntUltrSansPPF} to 
 prove Theorem \ref{Ouv_FRA}
 is a little awkward: as Remark \ref{SerreLemma} tells us immediately that the
 $G_k$-actions on the $\R$-trees $T_k$ are uniform, it is unnecessary to make a
 clever choice of base points as in Proposition \ref{pf_diag} and Theorem
 \ref{UltrSansPPF}.

\bibliographystyle{alpha}
\def\cprime{$'$}

\end{document}